\numberwithin{equation}{section}
\theoremstyle{plain}
\newtheorem{definition}{Definition}[section]
\newtheorem{theorem}[definition]{Theorem}
\newtheorem{proposition}[definition]{Proposition}
\newtheorem{lemma}[definition]{Lemma}
\theoremstyle{definition}
\newtheorem{remark}[definition]{Remark}
\newtheorem{example}[definition]{Example}
\let\save@mathaccent\mathaccent
\newcommand*\if@single[3]{%
  \setbox0\hbox{${\mathaccent"0362{#1}}^H$}%
  \setbox2\hbox{${\mathaccent"0362{\kern0pt#1}}^H$}%
  \ifdim\ht0=\ht2 #3\else #2\fi
  }
\newcommand*\rel@kern[1]{\kern#1\dimexpr\macc@kerna}
\newcommand*\widebar[1]{\@ifnextchar^{{\wide@bar{#1}{0}}}{\wide@bar{#1}{1}}}
\newcommand*\wide@bar[2]{\if@single{#1}{\wide@bar@{#1}{#2}{1}}{\wide@bar@{#1}{#2}{2}}}
\newcommand*\wide@bar@[3]{%
  \begingroup
  \def\mathaccent##1##2{%
    \let\mathaccent\save@mathaccent
    \if#32 \let\macc@nucleus\first@char \fi
    \setbox\z@\hbox{$\macc@style{\macc@nucleus}_{}$}%
    \setbox\tw@\hbox{$\macc@style{\macc@nucleus}{}_{}$}%
    \dimen@\wd\tw@
    \advance\dimen@-\wd\z@
    \divide\dimen@ 3
    \@tempdima\wd\tw@
    \advance\@tempdima-\scriptspace
    \divide\@tempdima 10
    \advance\dimen@-\@tempdima
    \ifdim\dimen@>\z@ \dimen@0pt\fi
    \rel@kern{0.6}\kern-\dimen@
    \if#31
      \overline{\rel@kern{-0.6}\kern\dimen@\macc@nucleus\rel@kern{0.4}\kern\dimen@}%
      \advance\dimen@0.4\dimexpr\macc@kerna
      \let\final@kern#2%
      \ifdim\dimen@<\z@ \let\final@kern1\fi
      \if\final@kern1 \kern-\dimen@\fi
    \else
      \overline{\rel@kern{-0.6}\kern\dimen@#1}%
    \fi
  }%
  \macc@depth\@ne
  \let\math@bgroup\@empty \let\math@egroup\macc@set@skewchar
  \mathsurround\z@ \frozen@everymath{\mathgroup\macc@group\relax}%
  \macc@set@skewchar\relax
  \let\mathaccentV\macc@nested@a
  \if#31
    \macc@nested@a\relax111{#1}%
  \else
    \def\gobble@till@marker##1\endmarker{}%
    \futurelet\first@char\gobble@till@marker#1\endmarker
    \ifcat\noexpand\first@char A\else
      \def\first@char{}%
    \fi
    \macc@nested@a\relax111{\first@char}%
  \fi
  \endgroup
}
\newcommand{\R}{\mathbf R}
\newcommand{\N}{\mathbf N}
\renewcommand{\d}{\,\mathrm{d}}
\newcommand{\eps}{\varepsilon}
\newcommand{\limi}{\varliminf}
\newcommand{\lims}{\varlimsup}
\newcommand{\enumlabelformat}{\roman}
\newlength{\thelabelsep}
\newcounter{inlineenum}
\renewcommand{\theinlineenum}{\enumlabelformat{inlineenum}}
\let\epsilon\varepsilon
\let\phi\varphi
\newcommand{\Prob}{\mathscr{P}}
\DeclareMathOperator{\supp}{spt}
\newcommand{\bdpi}{\boldsymbol{\pi}}
\newcommand{\eval}{\mathsf{e}}
\newcommand{\CC}{\mathrm{LCC}}
\newcommand{\mres}{\mathbin{\vrule height 1.6ex depth 0pt width
0.13ex\vrule height 0.13ex depth 0pt width 1.3ex}}
\newcommand{\LCC}{\mathrm{LCC}}
\newcommand{\e}{\eval}
\newcommand{\nchi}{{\raise.3ex\hbox{$\chi$}}}
\newcommand{\Lmax}{{\mathsf L}_{\mathsf{max}}}
\newcommand{\sta}{{\mathsf {st}}_{a}}
\let\@fnsymbol\@arabic
\title{PDE aspects of the dynamical optimal transport in the Lorentzian setting} 
\author{Nicola Gigli, Felix Rott, Matteo Zanardini \footnote{SISSA, ngigli@sissa.it, frott@sissa.it, mzanardi@sissa.it}}
\date{\today}
\begin{document}

\maketitle
\begin{abstract}
One of the crucial features of optimal transport on Riemannian manifolds is the equivalence of the `static', original, formulation of the problem and of the `dynamic' one, based on the study of the continuity equation. This furnishes the key link between Wasserstein geometry and PDEs that has found so many applications in the last 20 years.

In this paper we investigate this kind of equivalence on spacetimes. At the PDE level, this requires to transition from the continuity equation to a suitable `continuity inequality', to which we shall refer to as `causal continuity inequality'.

As a direct consequence of our findings we obtain a Lorentzian version of the celebrated Benamou--Brenier formula.
\end{abstract}
\tableofcontents

\section{Introduction} 
Let us recall the standard link between $W_p$-absolutely continuous curves of measures on a Riemannian manifold and weak solutions of the continuity equation as established in \cite{AmbrGigSav} (strictly speaking, in \cite{AmbrGigSav} only the case $M=\R^d $ has been considered, but up to a Nash embedding the general case is then quite straightforward).
\begin{theorem}\label{thm:BBRiem}
Let $M$ be a smooth complete Riemannian manifold, $p\in(1,+\infty)$ and $(\mu_t)\subset \Prob_p(M)$ be a weakly continuous curve. Then:
\begin{itemize}
\item[\emph{(A)}]  Suppose that $(\mu_t)$ is $W_p$-absolutely continuous. Then there are vector fields $(v_t)$ so that
\begin{equation}
\label{eq:CE}\tag{CE}
\partial_t\mu_t+{\rm div}(v_t\mu_t)=0\quad\text{in the sense of distributions}
\end{equation}
and moreover
\[
|\dot\mu_t|\geq \|v_t\|_{L^p_{\mu_t}}\qquad a.e.\ t\in[0,1],
\]
where $|\dot\mu_t|:=\lim_{h\to 0}\frac{W_p(\mu_t,\mu_{t+h})}{h}$ (the a.e.\ existence of $|\dot\mu_t|$ follows from absolute continuity).
\item[\emph{(B)}] Suppose that $(v_t)$ is a family of vector fields so that \eqref{eq:CE} holds and $\int_0^1\|v_t\|_{L^p_{\mu_t}}\,\d t<+\infty$. Then $(\mu_t)$ is $W_p$-absolutely continuous and 
\[
|\dot\mu_t|\leq \|v_t\|_{L^p_{\mu_t}}\qquad a.e.\ t\in[0,1].
\]
\end{itemize}
\end{theorem}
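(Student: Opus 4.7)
For part (B), which is the more elementary direction, my plan is to invoke a superposition principle to lift the pair $(\mu_t,v_t)$ to a probability measure $\bdeta$ on $C([0,1],M)$ such that $\mu_t=(\eval_t)_\#\bdeta$ and $\bdeta$-a.e.\ curve $\gamma$ is absolutely continuous with $\dot\gamma_t = v_t(\gamma_t)$ for almost every $t$. In $\R^d$ this is Ambrosio's classical superposition principle; on $M$ one can either derive it intrinsically through local charts, or more pragmatically Nash-embed $M\hookrightarrow \R^N$ isometrically, extend $v_t$ to a neighborhood of $M$, and observe that the resulting integral curves remain in $M$ because the original field is tangent. Once $\bdeta$ is in hand, applying Jensen's inequality to $(\eval_s,\eval_t)_\#\bdeta$ — a (not necessarily optimal) admissible transport plan between $\mu_s$ and $\mu_t$ — yields
\[
W_p^p(\mu_s,\mu_t) \leq \int \met(\gamma_s,\gamma_t)^p\,\d\bdeta(\gamma) \leq |t-s|^{p-1}\int_s^t\|v_r\|^p_{L^p_{\mu_r}}\,\d r,
\]
from which both the absolute continuity of $(\mu_t)$ and the sharp bound $|\dot\mu_t|\leq \|v_t\|_{L^p_{\mu_t}}$ follow immediately after dividing by $|t-s|$ and letting $t\to s$.

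For part (A) the strategy is dual: appeal to Lisini's representation theorem, which produces, for any $W_p$-absolutely continuous curve, a $\bdeta$ concentrated on $p$-absolutely continuous curves such that $\mu_t=(\eval_t)_\#\bdeta$ and $\int|\dot\gamma_t|^p\,\d\bdeta\leq |\dot\mu_t|^p$ for a.e.\ $t$. I would then disintegrate $\bdeta=\int \bdeta^t_x\,\d\mu_t(x)$ along $\eval_t$ and define the candidate velocity field as the fiberwise conditional expectation $v_t(x):=\int \dot\gamma_t\,\d\bdeta^t_x(\gamma)$, interpreted as a tangent vector at $x$ (which is legitimate since $\dot\gamma_t\in T_{\gamma_t}M$ and $\gamma_t=x$ on the fiber). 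Testing against $\phi\in C^\infty_c(M)$ one verifies
\[
\frac{\d}{\d t}\int\phi\,\d\mu_t \;=\; \int \langle\nabla\phi(\gamma_t),\dot\gamma_t\rangle\,\d\bdeta(\gamma) \;=\; \int\langle\nabla\phi,v_t\rangle\,\d\mu_t,
\]
which is exactly \eqref{eq:CE}, while the bound $\|v_t\|_{L^p_{\mu_t}}\leq |\dot\mu_t|$ follows from Jensen's inequality applied fiberwise combined with Lisini's estimate.

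The main obstacle is transferring the two representation-type inputs (Ambrosio's superposition in (B), Lisini's lifting in (A)) from the Euclidean setting of \cite{AmbrGigSav} to the Riemannian manifold $M$. The cleanest route is the Nash embedding suggested in the excerpt: identifying $TM$ with a subbundle of $T\R^N|_M$ makes the intrinsic continuity equation on $M$ and the Euclidean one in $\R^N$ equivalent whenever the vector field is tangent to $M$, so both directions reduce to statements already established in $\R^d$. A secondary technical point in (A) is the joint $(t,x)$-measurability of the conditional expectation defining $v_t$; this is routine via standard disintegration arguments, but should be checked, as should the fact that completeness of $M$ prevents curves with finite $L^p$-speed from escaping to infinity in finite time.
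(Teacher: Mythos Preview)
The paper does not actually prove Theorem~\ref{thm:BBRiem}; it is recalled as a known result from \cite{AmbrGigSav} and serves only as motivation for the Lorentzian analogue (Theorem~\ref{Thm: main}), which is the paper's real contribution. Your outline is correct and is essentially the classical argument.

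That said, it is instructive to compare your sketch with the paper's strategy for the Lorentzian version, since the introduction comments on this explicitly. For part~(A) your plan coincides with theirs: lift the curve of measures to a measure on paths (Lisini's theorem in your case, the Lifting Theorem of \cite{Octet} in theirs), disintegrate along the evaluation map, and define $v_t$ as the fibrewise barycenter; the paper even remarks that this route ``would work also in Riemannian signature''. For part~(B), however, the paper follows a genuinely different line: instead of invoking the superposition principle, it argues via Kantorovich duality and the Hopf--Lax semigroup (Kuwada's lemma), following \cite{AmbrosioBrueSemola, maggiOT}. Your superposition approach is the original one from \cite{AmbrGigSav} and is more direct when a superposition principle is available; the duality approach is purely variational/metric and therefore transports to settings---like the Lorentzian one treated here---where no superposition principle is known.
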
 
This statement is crucial in modern optimal transport as it links Wasserstein geometry to PDEs. Among other things, it can be used to give a rigorous foundation to Otto's calculus and can be used to justify the celebrated Benamou--Brenier formula.

In this paper we are interested in deriving a Lorentzian analogue of the above, where the manifold $M$ is now a smooth globally hyperbolic spacetime. In doing so it is rather natural to replace $W_p$-absolute continuity with causality of the curve of measures $(\mu_t)$. A central question is then how to interpret the continuity equation \eqref{eq:CE} in this setting. It turns out that it is appropriate to replace it with the `causal continuity inequality'. Loosely speaking, and referring to the body of the paper for the rigorous definition,  given a causal curve of measures $(\mu_t) $ and a family $(v_t)$ of Borel and future directed vector fields, we say that these satisfy the causal continuity inequality provided
\[
\frac{\d}{\d t}\int \varphi\,\d\mu_t\geq \int\d\varphi(v_t)\,\d\mu_t\qquad a.e.\ t
\]
for every $\varphi:M\to\R$ smooth causal function. With this in mind, our main result is: 
\begin{theorem} \label{Thm: main}
Let $(M,g)$ be a globally hyperbolic spacetime, $0 \neq p < 1$ and $(\mu_t)\subset  \Prob(M)$ be a causal path such that $\supp( \mu_t) \subset K$ for every $t \in [0,1]$, for some compact set $K\subset M$. Then:
\begin{itemize}
\item[\emph{(A)}] There exists a family of causal Borel vector fields $(v_t)$ such that $(\mu_t, v_t)$ satisfies  the causal continuity inequality and moreover
\begin{equation}
\int_0^1\tfrac1p |\dot \mu_t|_p ^p\,\d t \leq \int_0^1\tfrac1p \| v_t \|_{L^p_{\mu_t}} ^p\,\d t
\end{equation}
for almost every $t \in [0,1]$, where $|\dot\mu_t|_p = \lim_{h\downarrow0}\frac1h \ell_p(\mu_t,\mu_{t+h})$ (the a.e.\ existence of $|\dot\mu_t|_p$ follows from the causality of the curve -- see \cite[Theorem 2.23]{Octet}). 
\item[\emph{(B)}] For any family $(v_t)_{t \in [0,1]}$ of causal Borel vector fields  such that $\int_0^1 \tfrac1p \|v_t\|^p_{L^p_{\mu_t}} \, \d t >-\infty$ and  $(\mu_t, v_t)$ satisfies the causal continuity inequality we have 
\begin{equation} \label{eq: norm inequality}
\tfrac1p |\dot \mu_t|^p_p  \geq \tfrac1p   \| v_t \|^p_{L^p_{\mu_t}}  \qquad a.e.\ t.
\end{equation}
\end{itemize}
\end{theorem}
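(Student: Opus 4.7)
The plan is to adapt the Riemannian argument of Theorem \ref{thm:BBRiem} to the Lorentzian setting, replacing ordinary Cauchy--Schwarz and Hölder by their reverse counterparts (valid since $p<1$) and using Kantorovich duality for $\ell_p$ instead of for $W_p$.

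\textbf{For Part (B)}, I would fix $t_0$ in the Lebesgue set where $|\dot\mu_{t_0}|_p$ exists and $s\mapsto\tfrac1p\|v_s\|^p_{L^p_{\mu_s}}$ is $L^1$-regular. For small $h>0$, let $\varphi_h$ be a (suitably regularized) causal Kantorovich potential realizing the dual formulation of $\tfrac1p\ell_p(\mu_{t_0},\mu_{t_0+h})^p$. Plugging $\varphi_h$ into the causal continuity inequality and integrating over $[t_0,t_0+h]$ gives
\[
\int\varphi_h\,\d\mu_{t_0+h}-\int\varphi_h\,\d\mu_{t_0}\;\geq\;\int_{t_0}^{t_0+h}\!\int\d\varphi_h(v_s)\,\d\mu_s\,\d s.
\]
The Lorentzian reverse Cauchy--Schwarz for two causal vectors, $\d\varphi_h(v_s)\geq|\nabla\varphi_h|\cdot|v_s|$, combined with the reverse Hölder inequality, separates the right-hand side into the product of an $L^{p}_{\mu_s}$-norm of $|v_s|$ and an $L^{p'}_{\mu_s}$-norm of $|\nabla\varphi_h|$. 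Identifying the left-hand side with $\tfrac1p\ell_p(\mu_{t_0},\mu_{t_0+h})^p$ via duality and using the Hamilton--Jacobi-type control available for a Hopf--Lax-regularized $\varphi_h$ to bound $\||\nabla\varphi_h|\|_{L^{p'}_{\mu_s}}$ from below, the limit $h\downarrow 0$ yields the pointwise inequality \eqref{eq: norm inequality}.

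\textbf{For Part (A)}, I would proceed constructively via time-discretization. For $N\in\N$, partition $[0,1]$ into $N$ subintervals and take an optimal dynamical $\ell_p$-plan connecting $\mu_{(i-1)/N}$ to $\mu_{i/N}$, concentrated on causal geodesics. Averaging the geodesic velocities against this plan produces a piecewise-defined causal Borel vector field $v^N_t$ which satisfies a discrete form of the causal continuity inequality by construction and for which $\int_0^1\tfrac1p\|v^N_t\|^p_{L^p_{\mu_t}}\,\d t$ converges to $\int_0^1\tfrac1p|\dot\mu_t|^p_p\,\d t$ by the definition of metric derivative. The compact support hypothesis yields uniform bounds; extracting a convergent subsequence in a suitable Young-measure sense on $TM$ delivers a causal limit $v_t$ inheriting the causal continuity inequality and the desired integral $L^p$-bound.

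\textbf{Main obstacle.} The most delicate step is the passage to the limit in Part (A): the causal cone is closed under pointwise convergence but not necessarily under weak $L^p$ convergence taken with respect to the varying reference measure $\mu_t$, so the compactness argument has to combine the disintegration of $\mu_t$ with a Young-measure lift of the tangent vectors. In Part (B), the chief technical point is that Kantorovich potentials for $\ell_p$ are only $p$-concave a priori; applying the reverse Cauchy--Schwarz pointwise and passing from $|\nabla\varphi_h|$ to quantitative Hamilton--Jacobi information therefore requires a regularization procedure (a causal Hopf--Lax semigroup) together with an a.e.\ differentiability statement on $\supp\mu_t$.
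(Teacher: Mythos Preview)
Your overall strategy is in the right spirit, but Part~(A) differs substantially from the paper and contains a real gap, while Part~(B) is close to the paper's route but misstates where the difficulty lies.

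\textbf{Part (A).} The paper does \emph{not} discretize and pass to the limit. It invokes the Lifting Theorem~\ref{Th:Lifting} once to obtain a measure $\bdpi$ on $\LCC([0,1];M)$ with $(\e_t)_\#\bdpi=\mu_t$ for \emph{all} $t$, and then defines $v_t(x)$ as the barycenter of $\dot\gamma_t$ over those $\gamma$ with $\gamma_t=x$ (via disintegration of $(D_t)_\#\bdpi$). Jensen's inequality gives the $L^p$-bound, and Fatou gives \eqref{eq: causal continuity inequality}. Your scheme instead produces a piecewise-geodesic curve $\nu_t^N$ that agrees with $\mu_t$ only at the partition nodes; the pair satisfying a continuity inequality is $(\nu_t^N,v_t^N)$, not $(\mu_t,v_t^N)$. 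Since $(\mu_t)$ is only assumed causal (not even narrowly continuous), there is no reason $\nu_t^N\to\mu_t$, and even when it does, upgrading a Young-measure limit of $v_t^N\,\d\nu_t^N$ to a \emph{vector field} $v_t$ (rather than a measure on $TM$) times $\d\mu_t$, while preserving causality and the $L^p$-bound, is exactly the problem the Lifting Theorem was designed to bypass. This is a genuine missing idea, not just a technicality.

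\textbf{Part (B).} The paper's argument is the Kuwada-type interpolation you allude to, but organized globally rather than locally: one fixes a single steep $\varphi$, sets $\eta(t,s)=\int Q_t^p\varphi\,\d\mu_s$, and estimates $\eta(1,1)-\eta(0,0)$ using the Hamilton--Jacobi subsolution property of $Q_t^p$ (Proposition~\ref{prop: differential inequality of hopf lax with local steep}) together with Lemma~\ref{lem: analogo AmbrGigSav}. Taking the infimum over $\varphi$ and using Kantorovich duality gives the \emph{integral} inequality $u_p(\ell_p(\mu_0,\mu_1))\geq\int_0^1 u_p(\|v_t\|_{L^p_{\mu_t}})\,\d t$; the pointwise bound then follows by rescaling and Lebesgue points. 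Your local version---choosing $\varphi_h$ depending on $h$ and letting $h\downarrow0$---runs into the difficulty that \eqref{eq: causal continuity inequality} holds for each fixed test function only outside a $\varphi$-dependent null set, so varying $\varphi_h$ with $h$ requires a uniform null set. The paper handles exactly this via Lemmas~\ref{smothing a causal function}--\ref{lem: ineq 2} (smoothing plus a countable-density argument producing one universal $N\subset[0,1]$). Also, the relevant lower bound on the ``slope'' of $Q_t^p\varphi$ is the asymptotic steepness $\sta(Q_t^p\varphi)$, not $|\nabla\varphi_h|$; there is no pointwise differentiability statement needed, only the reverse Young inequality $\sta\cdot\|v\|\geq u_q(\sta)+u_p(\|v\|)$.
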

Out of this, it will be easy to obtain a Lorentzian version of Benamou--Brenier formula, see Theorem \ref{thm: BB theorem Lor}. To the best of our knowledge, the only analogous result in this direction has been obtained in \cite{MillerCausal}, under quite stronger assumptions.

We comment on the proof of Theorem \ref{Thm: main} above.

For part $(A)$ we rely on the Lifting Theorem of causal paths established in \cite[Theorem 2.43]{Octet}: the desired vector fields will then be built by suitably averaging, using the lifting, the speed of curves. We remark that this, all in all very natural, procedure would work also in Riemannian signature, but as far as we know this observation does not appear previously in the literature (at least not explicitly, but see \cite[Thm 2.3.24 and Rmk 2.3.26]{Gigli14}).

For part $(B)$ we argue instead via (the Lorentzian version of) Kuwada's Lemma from \cite{GigKuwOhta}. This procedure was initially designated to translate PDE information into metric ones in the very abstract setting of Alexandrov spaces and since then has become a cornerstone of the theory of ${\sf RCD}$ spaces (see e.g.\ \cite{Degiorgiandgromov} for references). Still in the elliptic signature, this line of thought has been used in \cite{AmbrosioBrueSemola} (see also \cite{maggiOT}) to prove part $(B)$ of Theorem \ref{thm:BBRiem}: some technical aspects of our argument are inspired by these references.

\section{Preliminaries}

\subsection{Lorentzian optimal transport}

In this section we collect a few important results and citations from measure theory, optimal transport and Lorentzian geometry. 

We will often make use of the Disintegration Theorem, a fundamental result from measure theory, a proof of which can be found in, e.g., \cite{AmbrosioFuscoPallara, Bogachev, Fremlin4}. 
Notice that our setting of spacetimes -- smooth and time-oriented Lorentzian manifolds -- is considerably less general than the cited references.


Concerning Lorentzian geometry, we refer to \cite{BeemBook, Min19} for an extensive review of basic concepts. 
We will work with the signature convention $(+,-,\ldots, -)$, which is not the one used in \cite{BeemBook, Min19}. 
In fact, our convention is less used in Lorentzian geometry in general, but we prefer to use this one as it seems more suitably tailored towards our setting: among other things, we have that the gradient of causal functions is future-directed, we do not need to put absolute values on otherwise negative scalar products, and various other sign subtleties simplify in our favor. 
On the other hand, one of the arguments for the other signature convention, namely that a spacelike slice has a truly Riemannian, i.e.\ positive, signature, is of little relevance to us. 

We will use $\leq$ and $\ll$ to denote the causal and timelike relations on a spacetime $(M,g)$, respectively, which say when two points can be connected by a piecewise $C^1$ curve $\gamma$ such that $g_{\gamma_t}(\dot \gamma_t, \dot \gamma_t) \geq 0$ (resp.\ $>0$) for all $t$. 
In the smooth setting, such curves are usually referred to as causal and timelike curves. 
We will further use the notation $J^+(x) = \{ y \in M : x \leq y \}$, and the dual notation of $J^-(x)$. 
By $I^{\pm}(x)$ we will denote the corresponding sets associated to $\ll$.

The following convention will be useful to us: given a sequence $(x_n)_n$ in $M$, we write $x_n \uparrow x$ if $x_n \to x$ and $x_n \leq x_{n+1} \leq x$ for every $n$. Dually, we will use $x_n \downarrow x$. 

\begin{definition}[Global hyperbolicity]
A spacetime $(M,g)$ is called globally hyperbolic if $\leq$ is a partial order and causal diamonds $J(x,y): =J^+(x) \cap J^-(y)=\{z \in M : x \leq z \leq y \}$ are compact for all $x,y \in M$. 
\end{definition}

Note that under the assumption of global hyperbolicity, we have that `causal emeralds' 
\[
J(A,B)=\{x \in M : \exists\, a \in A, b \in B: a \leq x \leq b \}
\]
are compact whenever $A,B \subset M$ are compact, see \cite[Subsection 6.6]{HawEll}.

Given $z \in M$ and $v \in T_zM$, we define
\begin{equation}
\| v \|_g \coloneqq 
\begin{cases}
\sqrt{g_z(v,v)} & \text{if $v \in J^+(0) \subset T_zM$,} \\
- \infty  & \text{otherwise} \, .
\end{cases}
\end{equation}
Then the time separation function on $(M,g)$ is denoted by $\ell$ and is defined as follows: 
\begin{equation} \label{eq: time sep}
\ell(x,y) := \sup \int_0^1 \|\dot \gamma_t \|_g \d t \, ,
\end{equation} 
where the supremum is taken over all piecewise $C^1$ future-directed causal curves connecting $x$ and $y$ (with the usual convention of $\sup \emptyset = -\infty$). 

Recall that for globally hyperbolic spacetimes it is well known that $\max\{\ell,0\}$ is finite and continuous, cf.\ \cite[Theorem 4.124]{Min19}. 

We also want to mention the concept of a dual hyperbolic norm present on any cotangent space $T^*_xM$, as for example discussed in the first section of the appendix in \cite{Octet} (see also \cite{HBS}). 
Indeed, if $x \in M$ and $\omega \in T^*_xM$ is causal in the sense that $\omega(v) \geq 0$ whenever $v \in J^+(0) \subset T_xM$, then 
\begin{equation}
\label{eq: dual hyperbolic norm}
\| \omega \|_{g^*} \coloneqq \inf_{\|v\| \geq 1} \omega(v) \, .
\end{equation}
Notice the immediate implication $\omega(v) \geq \|\omega\|_{g^*} \|v\|_g$. 
We further recall that $v \in J^+(0) \subset T_xM$ if and only if $g_x(v,w) \geq 0$ for all $w \in J^+(0)$. 

We now introduce the central class of functions that appear in this work. 

\begin{definition}[Causal/steep functions and causal vector fields]
Let $(M,g)$ be a globally hyperbolic spacetime. 
We say that a function $f : M \rightarrow \R$ is causal if it is $\leq$ non-decreasing, i.e. if $x \leq y$ implies $f(x) \leq f(y)$. 

We say that $f$ is $L$-steep provided $f(y) - f(x) \geq L \ell (x,y)$ for every $x,y \in M$.
The largest such $L > 0$ is called the steepness constant of $f$ and is denoted by $\mathbf{st}(f)$.
We say that $f$ is steep if it is $L$-steep for some $L >0$. 

Moreover, a vector field $v : M \rightarrow TM$ is called causal if $v(x)$ is a future directed causal vector in $T_xM$ for every $x \in M$. 
\end{definition} 
Notice that we regard $0$ as a causal vector. 
Different from the above introduced terminology in the smooth setting, for us a causal curve is defined simply by $\gamma_s \leq \gamma_t$ for all $s \leq t$, and in particular need not to be continuous. 

We refer to \cite[Lemma 3.2, Theorems A.1 \& A.2]{Octet} for some useful properties of causal functions (and causal curves). 
Observe that if $\gamma$ is a causal curve and $f$ is a causal function, then $f \circ \gamma$ is a non-decreasing function in the classical sense. We will denote by $\LCC([0,1]; M)$ the set of all left-continuous causal curves $\gamma:[0,1] \rightarrow M$. 
This space is defined in \cite{Octet} and it is endowed with a Polish topology \cite[Proposition 2.29]{Octet}. 
Moreover, for $t \in [0,1]$ we define the Borel map $\e_t : \LCC ([0,1]; M) \rightarrow M$ by $\e_t(\gamma) := \gamma_t$, i.e. the evaluation map at $t$.


\begin{definition}[Asymptotic steepness constant]
Let $(M,g)$ be a globally hyperbolic spacetime and let $f: M \rightarrow \R$ be a causal function. 
We define the asymptotic steepness constant by
\begin{equation} 
\label{eq: asymptotic steepness constant}
\mathbf{st}_a (f)(x) : = \sup_{x \in U \subset M} \mathbf{st} \left ( f \big{|}_{U} \right ) \, ,
\end{equation}
where the supremum is considered among all open neighborhoods $U$ of $x$ in $M$. 
\end{definition}

It is easy to check that, if $f: M \rightarrow \R$ is a causal function,  it holds 
\[
0 \leq \mathbf{st}(f) \leq \mathbf{st}_a(f)(x)\qquad\forall x\in M.
\]
Moreover, notice that $\mathbf{st}_a(f): M \rightarrow [0, \infty]$ is a lower semicontinuous function. 

For $0 \neq p < 1$, we define the concave non-decreasing function $u_p : \overline{\R} \rightarrow \overline{\R}$ by
$u_p(z) := \frac{1}{p} z^p$ for $z > 0$ and by 
\begin{equation}
\label{eq:up}
\begin{array}{rll}
u_p(z)&=-\infty\qquad &\text{for $z\in[-\infty,0)$ and any $p\leq 1$} \, ,\\
u_p(0)&=-\infty\quad\text{(resp.\ 0)}\qquad &\text{for  $p< 0$ (resp.\ $p\in(0,1]$)} \, ,\\
u_p(+\infty)&=\phantom{-}0\quad\text{(resp.\ $+\infty$)}\qquad &\text{for  $p< 0$ (resp.\ $p\in{(}0,1]$)} \, .
\end{array}
\end{equation}
Moreover, given a causal Borel vector field $v : M \rightarrow TM$, a probability measure $\mu$ and $0 \neq p <1$ we define
\[
u_p \left ( \| v \|_{L^p_{\mu}} \right ) := \int_M u_p ( \|v(x) \|_g) \d \mu(x) \, .
\]
The integral is well defined since $\mu$ is a probability measure and $u_p(\|v\|_g) \geq 0$ if $p \in (0,1)$ as well as $u_p(\|v\|_g) \leq 0$ if $p < 0$. 

We turn to the following key definition:
\begin{definition}[Causal speed]
Let $(M,g)$ be a spacetime and let $\gamma$ be a causal curve in $M$. 
Then its causal speed is defined as 
\begin{equation}
\label{eq:speedlimit}
|\dot\gamma_t|=\lim_{h\downarrow0}\frac{\ell(\gamma_t,\gamma_{t+h})}{h}=\lim_{h\downarrow0}\frac{\ell(\gamma_{t-h},\gamma_{t})}{h}\qquad a.e.\ t.
\end{equation}
\end{definition}
The a.e.\ existence of the limit has been established, together with other basic properties, in \cite[Theorem 2.23]{Octet} on more general spacetimes, including in particular that of probability measures on a given globally hyperbolic spacetime.

In our context, notice that it holds true that $\ell(x,y) = \sup \int |\dot \gamma_t| \d t$, where the supremum is taken over any causal curve inside $J(x,y)$, that is, neither connecting $x$ and $y$ nor necessarily continuous. 
Indeed, almost everywhere, causal curves are differentiable and $\| \dot \gamma_t\|=|\dot \gamma_t|$, cf.\ \cite[Theorem A.1]{Octet}. 
Moreover, \cite[Equation (2.27)]{Octet} reads $\int_s^t |\dot \gamma_r| \d t \leq \ell(\gamma_s,\gamma_t)$ for all $s<t$ and any causal curve. 
Finally, any causal curve inside $J(x,y)$ can be extended to a causal curve from $x$ to $y$ by definition, and any such extension does not decrease its length. 

\begin{definition}[Causal transport plans]
Let $(M,g)$ be a globally hyperbolic spacetime. 
Given probability measures $\mu,\nu\in\mathscr{P}(M)$, we denote by $\Pi_{\leq}(\mu,\nu)$ the set of causal couplings, i.e.\ transport plans which are concentrated on causally related points. 
More precisely, $\pi \in \Pi_{\leq}(\mu,\nu)$ if $\mu = (\Pr_1)_{\#}\pi, \nu=(\Pr_2)_{\#}\pi$ and $\pi(\{\ell \geq 0\})=1$, where $\Pr_1,\Pr_2: M \times M \rightarrow M$ are the natural projections on the first and second component, respectively. 
We further introduce a causal relation $\preceq$ on $\mathscr{P}(M)$ via $\mu \preceq \nu$ if and only if $\Pi_{\leq}(\mu,\nu) \neq \emptyset$. 
\end{definition}

\begin{definition}[$p$-Lorentz--Wasserstein distance]
\label{Def:Lorwass} 
Let $(M,g)$ be a spacetime and let $0 \neq p <1$. 
The $p$-Lorentz--Wasserstein distance $\ell_p(\mu,\nu)$ between $\mu,\nu\in\Prob(M)$ is defined to be $-\infty$ provided $\Pi_{\leq} (\mu_0, \mu_1) = \emptyset$ and otherwise by that element in $[0,+ \infty]$ such that
\begin{equation} \label{eq: p- wassestein}
u_p \left (\ell_p(\mu,\nu) \right ):= \sup_{\pi\in \Pi_{\leq}(\mu,\nu)}  \int u_p \left ( \ell(x,y ) \right ) \, \d \pi(x,y) \, .
\end{equation}
\end{definition}

In general, we will stick to the notation in \cite{Octet}.  

A central tool for our arguments is the Lifting Theorem for causal paths of probability measures, cf.\ \cite[Theorem 2.43]{Octet}. 

\begin{theorem}[Lifting paths of measures to measures on paths]
\label{Th:Lifting} 
Let $(M,g)$ be a globally hyperbolic spacetime and let $0 \neq p < 1$. 
Let $K \subset M$ be a compact subset and $[0,1] \ni t \mapsto \mu_t \in \Prob (M)$ be a left-continuous causal path with $\supp(\mu_t) \subset K$ for all $t \in [0,1]$. 
Then there exists $\bdpi\in \mathscr{P}(\CC([0,1];M))$ that induces the causal path $t \mapsto \mu_t$, i.e., $(\e_t)_{\#} \bdpi = \mu_t$ for all $t \in [0,1]$, and
\[
\int_0^1 u_p(|\dot\mu_t|)\,\d t = \iint_0^1 u_p \left (\| \dot \gamma_t \|_g \right ) \, \d t \, \d \bdpi (\gamma) \, .
\]
Moreover, if these are   $> -\infty$, then $  u_p \left ( | \dot\mu_t |_p \right )= \int u_p \left ( \| \dot\gamma_t\|_g \right ) \d \bdpi(\gamma)$ holds for a.e.\ $t \in [0,1]$.
\end{theorem}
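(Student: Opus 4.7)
My plan is the Lisini-type strategy adapted to the Lorentzian signature. Discretize along the dyadic partition $t^n_i := i/2^n$ and, for each consecutive pair, pick an optimal causal plan $\pi^n_i \in \Pi_{\leq}(\mu_{t^n_i}, \mu_{t^n_{i+1}})$ for the $\ell_p$ problem; existence follows from the compactness of the supports and upper semicontinuity of $u_p \circ \ell$. Iterating the gluing lemma produces $\sigma^n \in \Prob(M^{2^n+1})$ with the $\pi^n_i$ as consecutive two-dimensional marginals. I then lift $\sigma^n$ to a measure on curves: for each tuple $(x_0,\ldots,x_{2^n})$ I select a causal curve $\gamma$ with $\gamma_{t^n_i} = x_i$ each of whose pieces $\gamma|_{[t^n_i,t^n_{i+1}]}$ is a constant-speed reparametrization of a length-maximizer between its endpoints. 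Existence of such length-maximizers is guaranteed by global hyperbolicity (compactness of causal diamonds), and Borel measurability of the selection follows from Kuratowski--Ryll-Nardzewski applied in the Polish topology on $\LCC([0,1];M)$ from \cite[Proposition 2.29]{Octet}. Pushing $\sigma^n$ forward yields $\bdpi_n \in \Prob(\LCC([0,1];M))$.

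All such curves live in the compact causal emerald $J(K,K)$, so $(\bdpi_n)$ is tight and admits a weak limit $\bdpi$ along a subsequence. The marginal identity $(\e_t)_\# \bdpi = \mu_t$ holds at every dyadic $t$ by construction and extends to every $t \in [0,1]$ by left-continuity of both $t \mapsto \mu_t$ and of $\gamma \mapsto \gamma_t$ on $\LCC$, using that pointwise left-continuity of the evaluation and boundedness yield left-continuity of the pushforward in the weak topology.

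For the integral identity, one inequality is automatic: for any $\bdeta \in \Prob(\LCC([0,1];M))$ inducing $(\mu_t)$, Jensen along each curve (concavity and monotonicity of $u_p$), the bound $\int_s^t \|\dot\gamma_r\|_g\,\d r \leq \ell(\gamma_s, \gamma_t)$, the homogeneity $u_p(a\,\cdot) = a^p u_p(\cdot)$ for $a>0$, and the defining supremum of $\ell_p$ combine into $\iint_s^t u_p(\|\dot\gamma_r\|_g)\,\d r\,\d\bdeta \leq (t-s)\, u_p\bigl(\ell_p(\mu_s, \mu_t)/(t-s)\bigr)$. Summing over the partition and invoking the characterization of $|\dot\mu_t|_p$ as the a.e.\ limit of difference quotients of $\ell_p$ from \cite[Theorem 2.23]{Octet} yields $\iint u_p(\|\dot\gamma_t\|_g)\,\d t\,\d\bdeta \leq \int u_p(|\dot\mu_t|_p)\,\d t$ for every such $\bdeta$. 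The reverse inequality uses the specific construction: for the lifted $\bdpi$, each piece of each curve is a constant-speed length-maximizer, which saturates Jensen pointwise, while optimality of the $\pi^n_i$ saturates the supremum defining $\ell_p$ at the discrete level; combining these with Fatou, lower semicontinuity under weak convergence on $\LCC$, and Riemann-sum convergence propagates the saturation to the limit and forces equality.

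The principal obstacle is precisely this reverse inequality. The concavity of $u_p$ combined with its $-\infty$ value at $0$ for $p<0$ makes both Jensen and Fatou delicate, since a positive fraction of $\bdpi$ concentrating on non-timelike pieces in the limit threatens the identity; moreover, weak convergence on $\LCC$ does not a priori control the pointwise speed map $(t,\gamma) \mapsto \|\dot\gamma_t\|_g$. Both are tackled by exploiting that constant-speed length-maximizers saturate the chain of inequalities at every step, so the discrete optimality propagates intact to the limit without mass loss. The ``moreover'' statement, valid when the integrals are $>-\infty$, then follows from the integrated equality combined with the a.e.\ pointwise inequality $\int u_p(\|\dot\gamma_t\|_g)\,\d\bdpi \leq u_p(|\dot\mu_t|_p)$ obtained via Lebesgue differentiation of the bound above.
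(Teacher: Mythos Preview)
The paper does not prove this statement: Theorem~\ref{Th:Lifting} is quoted verbatim from \cite[Theorem 2.43]{Octet} as a preliminary tool, with no argument given. There is therefore nothing in the present paper to compare your proposal against.

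That said, your Lisini-type strategy is the natural one and is, in outline, how such lifting theorems are proved (the original Riemannian version being \cite[Theorem 2.3.24]{Gigli14} and its antecedents). A few remarks on your sketch. First, the semicontinuity you invoke for the reverse inequality goes the wrong way as written: you need $\iint u_p(\|\dot\gamma_t\|_g)\,\d t\,\d\bdpi \geq \limsup_n \iint u_p(\|\dot\gamma_t\|_g)\,\d t\,\d\bdpi_n$, i.e.\ \emph{upper} semicontinuity of the kinetic functional along the weakly converging sequence, not lower. This is not automatic, because the speed map is not continuous on $\LCC$; the standard workaround is to rewrite $\iint u_p(\|\dot\gamma_t\|_g)\,\d t$ as an infimum over partitions of $\sum_i (t_{i+1}-t_i)\,u_p\bigl(\ell(\gamma_{t_i},\gamma_{t_{i+1}})/(t_{i+1}-t_i)\bigr)$, which \emph{is} upper semicontinuous term-by-term (continuity of $\ell$ on compacts and of evaluations on $\LCC$), and then to exchange the infimum with the limit. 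Second, for $p<0$ the issue you flag is real: a limit curve that develops a null segment sends the integrand to $-\infty$, and Fatou alone does not rule this out. The fix is again to work at the level of the partition functional rather than the pointwise speed, so that the $-\infty$ only appears through $u_p\circ\ell$ on causal diamonds, where it is still upper semicontinuous with values in $\R\cup\{-\infty\}$. With these adjustments your argument goes through.
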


\subsection{The Kantorovich Duality}
In this subsection we formulate a version of the Kantorovich Duality that is suitable for our setting. 
Our cost function is $c_p:=u_p\circ\ell$ for $0 \neq p <1$. We shall frequently use the fact that $c_p$ is continuous from $\{\ell\geq 0\}$ to $\R\cup\{-\infty\}$. Some points of our theorem can be extracted from the more general framework of \cite[Theorems 4.1 \& 5.9]{villanibook}, others  are also already presented in \cite{CavMon}. 

The notions of $c_p$-concave functions and $c$-transform in Lorentzian signature have been introduced in \cite{CavMon} and also appear in \cite{Octet}, however we will state them in our setting for more immediate compatibility. 

\begin{definition}[$c_p$-concave and transform]
Let $(M,g)$ be a globally hyperbolic spacetime and let $0 \neq p < 1$. 
Let $E \subset M$ be a compact subset. 
A Borel function $\varphi: E \rightarrow \overline{\R}$ is called $c_p$-concave with respect to the compact set $E$ provided there exists a function $\psi: E \rightarrow \overline{\R}$ such that
\begin{equation} \label{eq: ellp concave}
    \varphi(x) = \inf_{y \in E\cap J^+(x)} \,\left \{ \psi (y) - u_p (\ell (x,y)) \right \} \, . \notag
\end{equation}
Moreover, the $c_p$-transform with respect to $E$ of $\varphi$ is defined by $\varphi^{c_p}_E : E \rightarrow \overline{\R}$ and 
\begin{equation} \label{eq: c transform}
\varphi^{c_p}_E (y) := \sup_{x \in E\cap J^-(y)} \, \left \{\varphi (x) + u_p (\ell(x,y)) \right \} \, .
\end{equation}
As in the usual optimal transport theory, we need to slightly adapt the definition of $c_p$-transform when we consider a function whose input `lives on the first space'. 
By slight abuse of notation, let us define
\begin{equation} \label{eq: c transform2}
\psi^{c_p}_E (x) := \inf_{y \in E} \, \left \{\psi (y) - u_p (\ell(x,y)) \right \} \, .
\end{equation}
\end{definition}
Since we are interested in the dynamical optimal transport between two probability measures with compact support, we can restrict our study to a large compact set in $M$. 
To this end, we need to find a well behaved neighborhood of the support of the measures. 
The following lemma demonstrates how to construct such a neighborhood. 

\begin{lemma} 
\label{lem: finding emralds}
Let $(M,g)$ be a globally hyperbolic spacetime and let $K \subset M$ be a compact subset. Then there exists a compact emerald $E$ such that $K \subset E$ and $E$ is a neighborhood of $K$. 
Moreover, for every $y \in K$ we have  $\overline{I^-(y) \cap E} = J^-(y) \cap E$.
\end{lemma}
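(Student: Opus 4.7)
The strategy is to build $E$ from slices of the causal past and future of $K$ by a pair of Cauchy hypersurfaces. Since $(M,g)$ is globally hyperbolic, pick a Cauchy time function $\tau$; because $K$ is compact, choose $a<\min\tau(K)$ and $b>\max\tau(K)$, and write $\Sigma_a,\Sigma_b$ for the associated Cauchy hypersurfaces. Set $A:=J^-(K)\cap\Sigma_a$ and $B:=J^+(K)\cap\Sigma_b$: these are compact by the standard fact that in a globally hyperbolic spacetime the causal past (resp.\ future) of a compact set meets any Cauchy hypersurface in a compact set. Put $E:=J(A,B)$, which is a compact emerald.

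To check $K\subset\mathrm{int}(E)$, fix $y\in K$. Past-directed timelike curves from $y$ must meet $\Sigma_a$, so $I^-(y)\cap\Sigma_a$ is a nonempty open subset of $A$; dually for $\Sigma_b$. Picking $a''\in I^-(y)\cap\Sigma_a\subset A$ and $b''\in I^+(y)\cap\Sigma_b\subset B$, the open set $I^+(a'')\cap I^-(b'')$ is a neighborhood of $y$ contained in $J(a'',b'')\subset E$.

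The inclusion $\overline{I^-(y)\cap E}\subset J^-(y)\cap E$ is immediate from $\overline{I^-(y)}=J^-(y)$ (valid in globally hyperbolic spacetimes) together with the closedness of $E$. For the reverse, given $z\in J^-(y)\cap E$, one approximates $z$ by a sequence $\tilde z_n\in I^-(y)$. Such a $\tilde z_n$ automatically lies in $E$ once $\tau(\tilde z_n)\geq a$: push-up gives $\tilde z_n\ll y\ll b''$ hence $\tilde z_n\in I^-(b'')\subset J^-(B)$, and any past-directed causal curve from $\tilde z_n$ meets $\Sigma_a$ at some $\alpha_n\leq\tilde z_n\leq y\in K$, so $\alpha_n\in J^-(K)\cap\Sigma_a=A$ and $\tilde z_n\in J^+(A)$.

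The main obstacle is the boundary case $\tau(z)=a$, where the approximating sequence may instead have $\tau(\tilde z_n)<a$. The fix is to replace each such $\tilde z_n$ by the crossing $\tilde z_n'$ of a future-directed timelike curve from $\tilde z_n$ to $y$ with $\Sigma_a$ (which exists since $\tilde z_n\ll y$ and $\tau(\tilde z_n)<a<\tau(y)$). Then $\tilde z_n'\in I^-(y)$, because it lies on a timelike subcurve ending at $y$, and $\tau(\tilde z_n')=a$; and a compactness argument -- using that $\tilde z_n'\in J(\tilde z_n,y)$, that this diamond is compact by global hyperbolicity, and that $J(z,y)\cap\Sigma_a=\{z\}$ (since $\tau$ strictly increases along causal curves) -- forces $\tilde z_n'\to z$, completing the proof.
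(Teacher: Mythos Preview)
Your argument is correct and follows essentially the same strategy as the paper's: sandwich $K$ between two Cauchy hypersurfaces, build $E$ from the resulting compact causal slices, and for the reverse inclusion push the approximants $\tilde z_n\in I^-(y)$ onto a Cauchy surface along timelike curves so that the limit is forced to be $z$. The paper's construction differs only cosmetically---it uses a finite set of points on the upper surface in place of your $B=J^+(K)\cap\Sigma_b$, and projects onto the slice $\Sigma_r\ni z$ uniformly rather than treating the boundary case $\tau(z)=a$ separately.
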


\begin{proof}
    Since $M$ is locally compact there exists a compact neighborhood $\tilde{K}$ of $K$. In particular, we want to find a compact emerald that contains $\tilde{K}$ with the desired properties. This last claim is a consequence of the celebrated metric Splitting Theorem by Geroch \cite{GerochDomDep} and Bernal-Sanchez \cite{BernSanchez}. Indeed, $M$ is smoothly foliated by Cauchy surfaces $(\Sigma_t)_{t \in \R}$, and moreover each $\Sigma_t$ can be assumed to be acausal, see \cite{Minguzzisteep}. Since $\tilde{K}$ is compact, we have that there exists $t_1 <t_2$ such that $\tilde{K} \subset I(\Sigma_{t_1}, \Sigma_{t_2})$, moreover $\{ I^-(a) : a \in \Sigma_{t_2} \}$ an open covering of $\tilde{K}$ and we can find $A: = \{ a_1, \dots , a_k \} \subset \Sigma_{t_2}$ such that $\tilde{K} \subset I^-(A) =I^-(a_1) \cup \dots \cup I^-(a_k)$. We define $E: = J^-(A) \cap J^+(\Sigma _{t_1})$. For every $i = 1 \dots k$, we have that $J^-(a_i) \cap J^+(\Sigma_{t_1})$ is compact because of \cite[Proposition 5.20]{penroseBook} and so $E$ is compact. \\
    It is easy to verify that $E$ is a neighborhood of $K$ and, in particular, for every $y \in K$ we have that $I^-(y) \cap E \neq \emptyset$. Moreover, $\overline{I^-(y) \cap E} \subset J^-(y) \cap E$ follows by global hyperbolicity. Vice versa, let $ z \in J^-(y) \cap E$, then there exists $a_i \in A$ such that $z \in I^-(a_i) \subset E$ by construction. Let $t_2 > r \geq t_1$ such that $z \in \Sigma_r$. Since $z \in J^-(y)$, there exists some $(z_n) _n \subset I^-(y)$ such that $z_n \rightarrow z$. Suppose that definitely
    $z_n \not \in E$ (otherwise the statement is trivial), then we can just connect $z_n$ with $y$ with timelike geodesics that definitely must intersect the Cauchy hypersurface $\Sigma _r$ in $p_n \in \Sigma_r \cap E \cap I^-(y)$. Then $p_n$ must admit a (non-relabeled) convergent subsequence $p_n \rightarrow p \in \Sigma_r$. However, we have $p_n \geq z_n \rightarrow z$ and so $p \geq z$. As $\Sigma_r$ is acausal it follows that $p=z$.
\end{proof}

The following lemma is a structural result for the $c_p$-transform. We will need this result in the proof of the Kantorovich Duality Theorem \ref{thm: kantorovich duality}.

\begin{lemma} \label{lem: basic properties of c-transform}
Let $(M,g)$ be a globally hyperbolic spacetime and let $0 \neq p <1$. Let $K \subset M$ be a compact subset and let $\varphi: M \rightarrow \R$ be a continuous and bounded function. Then there exists a compact emerald $E$ such that $E$ is a neighborhood of $K$ and $\varphi^{c_p}_E : E \rightarrow \R \cup \{ - \infty \}$ is a bounded, continuous and causal function on $K$.
\end{lemma}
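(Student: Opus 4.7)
\emph{Construction of $E$ and boundedness.} My first step is to let $E$ be the compact emerald produced by Lemma 2.9. Since $\varphi$ is bounded, $\ell$ restricted to $\{\ell\ge 0\}\cap (E\times E)$ is continuous and therefore bounded above by some $L<\infty$, and $u_p$ is non-decreasing, I get the uniform upper bound $\varphi^{c_p}_E(y)\leq\|\varphi\|_\infty+u_p(L)$ on all of $E$. For the lower bound on $K$, I use the identity $\overline{I^-(y)\cap E}=J^-(y)\cap E$ (which applies to every $y\in K$): since $y\in J^-(y)\cap E$, the set $I^-(y)\cap E$ is nonempty, so I can pick some $x\in I^-(y)\cap E$ with $\ell(x,y)>0$ and conclude $\varphi^{c_p}_E(y)\geq \varphi(x)+u_p(\ell(x,y))>-\infty$.

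\emph{Causality on $K$.} Given $y_1\leq y_2$ in $K$, transitivity of $\leq$ gives $E\cap J^-(y_1)\subset E\cap J^-(y_2)$, while the reverse triangle inequality for $\ell$ yields $\ell(x,y_2)\geq \ell(x,y_1)+\ell(y_1,y_2)\geq \ell(x,y_1)$ for every $x\leq y_1\leq y_2$. Since $u_p$ is non-decreasing, every candidate in the supremum defining $\varphi^{c_p}_E(y_1)$ is dominated by the corresponding candidate for $\varphi^{c_p}_E(y_2)$.

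\emph{Continuity on $K$.} Fix $y_n\to y$ in $K$. For upper semicontinuity, choose near-optimal $x_n\in E\cap J^-(y_n)$ for $\varphi^{c_p}_E(y_n)$; by compactness of $E$ a subsequence converges to some $x\in E$, and closedness of the causal relation in the globally hyperbolic setting forces $x\leq y$. Continuity of $\varphi$, of $\ell$ on $\{\ell\geq 0\}$, and of $u_p$ at the relevant value then yield $\limsup_n\varphi^{c_p}_E(y_n)\leq \varphi(x)+u_p(\ell(x,y))\leq \varphi^{c_p}_E(y)$. The main obstacle is the matching lower semicontinuity, and here the specific property of the emerald provided by Lemma 2.9 is essential. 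Given $\varepsilon>0$, pick $x\in E\cap J^-(y)$ with $\varphi(x)+u_p(\ell(x,y))\geq \varphi^{c_p}_E(y)-\varepsilon$. Using $\overline{I^-(y)\cap E}=J^-(y)\cap E$, approximate $x$ by points $x_k\in I^-(y)\cap E$ with $\ell(x_k,y)>0$; by continuity of $\varphi$, $\ell$ and $u_p$, for $k$ large the quantity $\varphi(x_k)+u_p(\ell(x_k,y))$ exceeds $\varphi^{c_p}_E(y)-2\varepsilon$. Now fix such a $k$: since $y\in I^+(x_k)$ and $I^+(x_k)$ is open, $y_n\in I^+(x_k)$ for all large $n$, so $x_k\in E\cap J^-(y_n)$ is an admissible test point for $\varphi^{c_p}_E(y_n)$. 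Continuity of $\ell(x_k,\cdot)$ and of $u_p$ gives $\liminf_n\varphi^{c_p}_E(y_n)\geq \varphi(x_k)+u_p(\ell(x_k,y))\geq \varphi^{c_p}_E(y)-2\varepsilon$, and letting $\varepsilon\downarrow 0$ concludes the proof.
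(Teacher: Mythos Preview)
Your proof is correct and follows essentially the same approach as the paper: the same emerald $E$ from Lemma~\ref{lem: finding emralds}, the same upper semicontinuity argument via compactness and closedness of $\leq$, and the same lower semicontinuity idea via the openness of $I^+(x)$ for $x\ll y$. The only cosmetic difference is that the paper first records the identity $\sup_{J^-(y)\cap E}=\sup_{I^-(y)\cap E}$ and then argues directly with timelike test points, whereas you carry the $\varepsilon$-approximation of a near-optimal $x\in J^-(y)\cap E$ by $x_k\in I^-(y)\cap E$ inline; the content is the same.
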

\begin{proof}
We pick $E$ as in  Lemma \ref{lem: finding emralds}. The fact that $\varphi^{c_p}_E$ is bounded is easy to check (see e.g.\ the arguments in point (i) of Lemma \ref{lem: properties of Hopf--Lax}).  
It is clear that $\varphi^{c_p}_E$ is causal by definition, moreover it is finite on $K$. Since $K$ is compact we are left to prove the continuity property.
Let $y \in K$. By Lemma \ref{lem: finding emralds}, we have $\overline{I^-(y) \cap E} = J^-(y) \cap E$ and thus in particular
\begin{equation}
\label{eq:equivvc}
\varphi^{c_p}_E(y)
 = \sup_{J^-(y) \cap E} \{ \varphi(x) + u_p( \ell(x,y)) \}= \sup_{I^-(y) \cap E} \{ \varphi(x) + u_p( \ell(x,y)).
\end{equation}
Let $(y_n)_n \subset K$ such that $y_n \rightarrow y$ in $K$. Let $x\in I^-(y)\cap E$ and notice that for $n\in\N$ sufficiently big we have $x\in I^-(y_n)\cap E$ as well, therefore 
\[
\limi_n\varphi^{c_p}_E(y_n)\geq \limi_n\varphi(x)+c_p(x,y_n)=\varphi(x)+c_p(x,y).
\]  
Taking the sup in $x\in I^-(y)\cap E$ and recalling \eqref{eq:equivvc} we get lower semicontinuity of $\varphi^{c_p}_E$. For upper semicontinuity we notice that 
 for every $n \in \N$ the set   $J^-(y_n) \cap E$ is compact and so continuity of $\varphi$ and $c_p$ (on $\{\ell\geq 0\}$) grant the existence of  $ x_n \in J^-(y_n)\cap E $ such that
\[
\varphi^{c_p}_E(y_n) = \varphi(x_n) + u_p (\ell(x_n , y_n)).
\]
Since   $(x_n)_n \subset E$, passing to a non-relabeled  subsequence we can assume that $x_n\to \overline{x} \in E$. Global hyperbolicity gives the closure of $\{\ell\geq 0\}$ and thus $x_{n} \leq y_{n}$ for every $n$ implies  $\overline{x}\leq y$. It follows that
\[
\varphi^{c_p}_E (y) \geq \varphi(\overline{x}) + u_p (\ell(\overline{x},y)) = \lim_n \varphi(x_{n}) + u_p (\ell(x_{n}, y_{n})) \geq \lims_n \varphi^{c_p}_E(y_{n}),
\]
as desired.
\end{proof}

\begin{theorem}[Kantorovich Duality]
\label{thm: kantorovich duality}
Let $(M,g)$ be a globally hyperbolic spacetime and let $0 \neq p <1$. Consider $\mu, \nu \in \Prob (M)$ with compact support such that $\mu \preceq \nu$. Then:
\begin{itemize}
\item[i)] There exists a $p$-optimal transport plan $\pi \in \Pi_{\leq} (\mu, \nu)$ and $\ell_p( \mu, \nu) < \infty$.
\item[ii)] There exists a compact neighborhood $E$ of the compact set $K:=J(\supp(\mu), \supp(\nu))$ such that
\begin{equation} \label{eq: Kantorovich duality formula}
    u_p \left ( \ell_p (\mu, \nu) \right ) = \inf \left \{\int \psi (y) \d \nu (y) - \int \varphi (x) \d \mu (x) \right \} \, ,
\end{equation}
where the infimum is taken over all bounded and continuous $\varphi: E \rightarrow \R$ and $\psi: E \rightarrow \R$ such that $\psi (y) - \varphi (x) \geq u_p(\ell(x,y))$ on $E \times E$. 
In particular, the formula in \eqref{eq: Kantorovich duality formula} holds even if we consider the infimum among $\psi \in L^1(\nu)$, $\varphi \in L^1(\mu)$ such that $\psi (y) - \varphi(x) \geq u_p(\ell(x,y))$ on $E \times E$. Moreover, 
\begin{equation}
\label{eq: kantorovich infimizers}
u_p \left ( \ell_p (\mu, \nu) \right )  = \inf \left \{  \int \varphi^{c_p}_E (y) \d \nu (y)- \int \varphi (x) \d \mu (x) \right \} \, ,
\end{equation}
where the infimum is taken over all continuous and bounded steep functions $\varphi$ on $K$.
\end{itemize}
\end{theorem}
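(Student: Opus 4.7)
For part (i), the compact supports of $\mu$ and $\nu$ make $\Pi(\mu,\nu)$ weakly compact, and global hyperbolicity makes $\{\ell \geq 0\}$ closed in $M \times M$, so $\Pi_{\leq}(\mu,\nu)$ is a nonempty closed subset. The cost $c_p = u_p \circ \ell$ is upper semicontinuous on $\{\ell \geq 0\}$ (continuous when $p \in (0,1)$, u.s.c.\ at $\ell = 0$ when $p < 0$) and bounded above on the compact causal emerald $K := J(\supp\mu, \supp\nu)$. Hence $\pi \mapsto \int c_p\,\d\pi$ is u.s.c.\ on $\Pi_{\leq}(\mu,\nu)$ and attains its supremum; the uniform bound on $\ell$ over $K$ gives $\ell_p(\mu,\nu) < +\infty$.

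For part (ii), I pick $E$ as provided by Lemma \ref{lem: finding emralds}. The ``easy'' inequality $u_p(\ell_p(\mu,\nu)) \leq \int \psi\,\d\nu - \int \varphi\,\d\mu$ for any admissible pair (bounded continuous or merely $L^1$) follows by integrating $\psi(y) - \varphi(x) \geq u_p(\ell(x,y))$ against any $\pi \in \Pi_{\leq}(\mu,\nu)$ and taking the supremum over $\pi$. For the reverse inequality, I invoke the classical Kantorovich duality for upper semicontinuous, bounded-above costs on the compact Polish space $E \times E$ (as in \cite[Theorems 4.1 \& 5.10]{villanibook}), absorbing the $-\infty$ values of $c_p$ (at $\ell = 0$ when $p < 0$ and on all non-causal pairs) by monotone truncation $c_p^k := c_p \vee (-k)$, applying the bounded-continuous duality for each $c_p^k$, and passing to the limit $k \to \infty$ by monotone convergence. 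This yields the duality in both the $L^1$ and bounded-continuous formulations, and the equivalence between the two is immediate (the bounded continuous pairs form a subclass of the $L^1$ ones, and the easy inequality applies equally to both).

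For \eqref{eq: kantorovich infimizers}, given any bounded continuous admissible pair $(\varphi,\psi)$, Lemma \ref{lem: basic properties of c-transform} yields that $\varphi^{c_p}_E$ is bounded continuous causal on $K$ and, by admissibility of $(\varphi,\psi)$, $\varphi^{c_p}_E \leq \psi$ on $E$; replacing $\psi$ by $\varphi^{c_p}_E$ therefore only decreases the functional. Iterating once more with the double transform $(\varphi^{c_p}_E)^{c_p}_E \geq \varphi$ (which also preserves admissibility), we may assume $\varphi$ is $c_p$-concave on $E$. The crucial remaining step is to show that such $\varphi$ is automatically steep on $K$: for $x \leq x'$ in $K$ and any $y \in E \cap J^+(x')$, the reverse triangle inequality gives $\ell(x,y) \geq \ell(x,x') + \ell(x',y)$, and the concavity of $u_p$ on $(0,\infty)$ together with the bound $D := \max_{E \times E} \ell$ yields
\[
u_p(\ell(x',y)) \leq u_p(\ell(x,y)) - u_p'(D)\,\ell(x,x'),
\]
so that $\psi(y) - u_p(\ell(x',y)) \geq (\psi(y) - u_p(\ell(x,y))) + u_p'(D)\,\ell(x,x') \geq \varphi(x) + u_p'(D)\,\ell(x,x')$. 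Taking the infimum over $y$ in the definition of $\varphi(x')$ gives $\varphi(x') \geq \varphi(x) + u_p'(D)\,\ell(x,x')$, i.e., $\varphi$ is $u_p'(D)$-steep on $K$.

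The main technical obstacle throughout is the clean handling of the $-\infty$ values of $c_p$, especially for $p<0$ where $u_p(0) = -\infty$ spoils direct continuity; this is managed by the truncation argument described above and by using that $K \subset E$ sits inside $E$ in a way that admits density of the chronological relation in the causal one (as exploited in Lemma \ref{lem: basic properties of c-transform}).
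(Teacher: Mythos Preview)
Your treatment of part (i) and of the basic duality \eqref{eq: Kantorovich duality formula} matches the paper's: both defer to the classical Kantorovich theory (Villani) on the compact emerald $E$ from Lemma~\ref{lem: finding emralds}, with the $-\infty$ values of $c_p$ handled by truncation or upper semicontinuity.

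Where you diverge from the paper is in establishing \eqref{eq: kantorovich infimizers}. The paper does \emph{not} pass to $c_p$-concave $\varphi$; instead it fixes a smooth globally $1$-steep function $\mathbf t:M\to\R$ (whose existence is a nontrivial result of Minguzzi) and perturbs any continuous bounded $\varphi$ to $\varphi+\eps\mathbf t$, which is $\eps$-steep, checking that the functional changes by at most $2\eps\|\mathbf t\|_{C_b(K)}$. Your route is more structural: you double-transform to force $c_p$-concavity and then observe, via the reverse triangle inequality and concavity of $u_p$, that any $c_p$-concave function is automatically $u_p'(D)$-steep on $K$ with $D=\max_{E\times E}\ell$. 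That observation is correct and pleasant, and it avoids invoking the existence of smooth steep time functions.

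There is, however, a gap you should close. The infimum in \eqref{eq: kantorovich infimizers} is over \emph{continuous} bounded steep $\varphi$ on $K$, so after replacing $\varphi$ by the double transform $\tilde\varphi:=(\varphi^{c_p}_E)^{c_p}_E$ you must verify that $\tilde\varphi$ is continuous on $K$. Lemma~\ref{lem: basic properties of c-transform} only treats the sup-transform $\varphi^{c_p}_E$; the inf-transform requires the time-reversed density property $\overline{I^+(x)\cap E}=J^+(x)\cap E$ for $x\in K$, which is \emph{not} guaranteed by the asymmetric construction of $E$ in Lemma~\ref{lem: finding emralds} (there $E=J^-(A)\cap J^+(\Sigma_{t_1})$, tailored to past cones). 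You would need either to symmetrize the choice of $E$ so that both past and future density hold, or to prove a separate continuity lemma for the inf-transform. This is fixable, but as written the argument is incomplete at precisely the point where it departs from the paper.
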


\begin{proof}
The first point i) follows in the same way of \cite[Theorem 4.1]{villanibook} and the first part of ii) follows by \cite[Theorem 5.9]{villanibook} and Lemma \ref{lem: finding emralds}, picking $E$ as in there. We are left to prove the formula  \eqref{eq: kantorovich infimizers}. Notice that if $(\psi,\phi)$ are admissible continuous functions in \eqref{eq: Kantorovich duality formula}, then $(\varphi,\varphi^{c_p}_E)$ is also so (continuity of $\varphi^{c_p}_E$ follows from Lemma \ref{lem: basic properties of c-transform}) and clearly $\varphi^{c_p}_E\leq\psi$. This suffices to prove that in \eqref{eq: Kantorovich duality formula} we can limit ourselves to couples of the form $(\varphi,\varphi^{c_p}_E)$. To conclude it thus only remains to prove that we can limit ourselves to considering steep functions.

Recall from  \cite[Theorem 1.3]{Minguzzisteep}  that by global hyperbolicity there exists a   smooth $1$-steep function $\mathbf t: M \to \R$. Let $\eps>0$ and notice that the function $\varphi + \varepsilon \mathbf t$ is a continuous $\varepsilon$-steep function.
Moreover, by definition of $c_p$-transform we have
\[
( \varphi + \varepsilon \mathbf t )^{c_p}_E(y) - \varphi^{c_p}_E(y) \leq \varepsilon \mathbf t (y) \qquad\forall y\in K
\]
and therefore
\begin{align*}
\int (\varphi + \varepsilon \mathbf t )^{c_p}_E \d \nu- \int \varphi + \varepsilon \mathbf t \d \mu & \leq \int \varphi^{c_p}_E(y)  \d \nu(y) - \int \varphi (x) \d \mu(x) + \varepsilon \int \mathbf t \d [\nu - \mu] \\
& \leq \int \varphi^{c_p}_E(y)  \d \nu(y) - \int \varphi (x) \d \mu(x) + 2\varepsilon \|\mathbf t\|_{C_b(K)}.
 \end{align*}
The conclusion follows by arbitrariness of $\varepsilon >0$ and $\varphi$.
\end{proof}

\section{The causal continuity inequality}

\subsection{Analysis on globally hyperbolic spacetimes}

We start this section with the definition of causal continuity inequality. This definition will be crucial for the rest of the paper.

\begin{definition} \label{def: supersolution in the causal sense}
Let $(M,g)$ be a globally hyperbolic spacetime. We say that $(\mu_t,v_t)$ is a solution of the causal continuity inequality  \eqref{eq: causal continuity inequality}, or satisfies \eqref{eq: causal continuity inequality} in short, provided the following hold:
\begin{itemize}
\item[i)] $(\mu_t)$ is a causal curve defined on $[0,1]$ of probability measures on $M$ and for some $K\subset M$ compact we have $\supp(\mu_t) \subset K$ for every $t\in[0,1]$;
\item[ii)] $v_t:M\to TM$ is a Borel  causal vector field for every $t\in[0,1]$;
\item[iii)] For every smooth and  causal function $\varphi:M\to\R$ we have
    \begin{equation} \label{eq: causal continuity inequality}
    \tag{CCI}
    \dfrac{\d}{\d t} \left ( \int \varphi(x) \, \d \mu_t(x) \right ) - \int \d\varphi_x (v_t(x)) \, \d \mu_t(x) \geq 0 \qquad a.e.\ t\in[0,1].
    \end{equation}
\end{itemize}
\end{definition}
Notice that for a causal curve of (compactly supported) probability measures $(\mu_t)$ and $\varphi: M\to\R$ causal, the map $\Phi:[0,1]\to\R$ defined as 
\begin{equation}
\label{eq:Phi}
\Phi(t):=\int\varphi\,\d\mu_t
\end{equation}
is monotone, thus the derivative in \eqref{eq: causal continuity inequality} exists for a.e.\ $t\in[0,1]$ and the definition is well posed. To see monotonicity, let $s\leq t$, $ \pi \in \Pi_{\leq}(\mu_s, \mu_t)$ and notice that
\[
0 \leq \int \varphi(y) - \varphi(x) \d \pi(x,y) = \int \varphi (y) \d \mu_t (y) - \int \varphi (x) \d \mu_s(x) = \Phi(t) - \Phi(s).
\]
In particular, we see that if $(\mu_t)$ is causal, then $(\mu_t,0)$  always satisfies \eqref{eq: causal continuity inequality}.
\begin{remark}[Distributional formulation of \eqref{eq: causal continuity inequality}] \label{rem: distributional formulation}
Note that \eqref{eq: causal continuity inequality} can equivalently be formulated as a distributional inequality.
Indeed, the couple $(\mu_t, v_t)$ satisfies \eqref{eq: causal continuity inequality} if and only if 
\begin{equation} \label{eq: distributional causal continuity inequality}
D\Phi \geq g(t) \d t \qquad \text{ holds in } \mathcal{D}'(0,1), \qquad\text{where}\qquad  g(t) := \int \d\varphi_x(v_t(x)) \d \mu_t(x),
\end{equation}
for every $\varphi: M \rightarrow \R$ smooth causal function. Notice that that $g$ is well defined since the integrand is non-negative. \\

Indeed, we already noticed that $\Phi$ is monotone, thus its distributional derivative is (representable by) a non-negative measure $D\Phi$ and the Lebesgue Decomposition Theorem allows to write
\[
D\Phi -g(t) \d t = \left [ \Phi'(t) -g(t) \right ] \d t +(D\Phi)^s.
\]
The monotonicity of $\Phi$ ensures that $(D\Phi)^s\geq 0$, from which it easily follows that \eqref{eq: causal continuity inequality} holds if and only if $D\Phi -g(t) \d t\geq 0$, as claimed.
\end{remark}

The following lemma is a technical one and it is needed in the proof of Proposition \ref{prop: part (A) in glob hyper}. 
It is a slight generalization of \cite[Theorem 2.55]{Min19} and for completeness we give the full proof.
\begin{lemma} \label{lem: control on the derivative}
    Let $(M,g)$ be a globally hyperbolic spacetime and let $K$ be a compact set in $M$. Let $h$ be a Riemannian metric on $M$, then there exists a constant $C=C(K,h) \in (0, +\infty)$, such that 
    \begin{equation} \label{eq: estimate with riemannian metric}
        \int_0^1 \sqrt{h_{\gamma_t}(\dot \gamma_t, \dot \gamma_t)} \d t \leq C
    \end{equation}
    for every causal curve $\gamma :[0,1] \rightarrow K$.
\end{lemma}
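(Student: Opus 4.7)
The plan is to test a given causal curve $\gamma$ against a smooth steep time function: the resulting one-variable increment controls the $h$-length uniformly over $K$. Concretely, by \cite[Theorem 1.3]{Minguzzisteep} (already exploited in the proof of Theorem \ref{thm: kantorovich duality}) there exists a smooth $1$-steep function $\mathbf{t}: M \to \R$; being in particular a temporal function, its gradient is timelike future-directed, so $\d\mathbf{t}_p(v) > 0$ for every nonzero future-directed causal $v \in T_pM$.

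The key step is the following uniform pointwise estimate: there exists a constant $c = c(K,h) > 0$ with
\[
\d\mathbf{t}_p(v) \,\geq\, c \sqrt{h_p(v,v)} \qquad \forall\, p \in K,\ \forall\, v \in J^+(0) \subset T_pM.
\]
To prove this, consider the set
\[
\mathcal{A} := \bigl\{ (p,v) \in TM : p \in K,\ v \in J^+(0) \subset T_pM,\ h_p(v,v) = 1 \bigr\}.
\]
Compactness of $\mathcal{A}$ follows from compactness of $K$, closedness of the future causal cone as a subset of $TM|_K$ (a consequence of smoothness of $g$ and of the time orientation), and compactness of each $h$-unit sphere. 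Since $(p,v) \mapsto \d\mathbf{t}_p(v)$ is continuous and strictly positive on $\mathcal{A}$, it attains a positive minimum $c > 0$; homogeneity in $v$ then yields the stated inequality on the full future causal cone.

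To conclude, for any causal curve $\gamma:[0,1] \to K$ the composition $\mathbf{t} \circ \gamma$ is a non-decreasing function $[0,1] \to \R$ (by causality of both $\mathbf{t}$ and $\gamma$). By \cite[Theorem A.1]{Octet} $\gamma$ is differentiable almost everywhere, and at each such $t$ the chain rule gives $(\mathbf{t} \circ \gamma)'(t) = \d\mathbf{t}_{\gamma_t}(\dot\gamma_t) \geq 0$. Applying the Lebesgue inequality for monotone functions,
\[
\int_0^1 \d\mathbf{t}_{\gamma_t}(\dot\gamma_t) \,\d t \,\leq\, \mathbf{t}(\gamma_1) - \mathbf{t}(\gamma_0) \,\leq\, 2 \sup_{p \in K} |\mathbf{t}(p)|,
\]
and combining with the pointwise estimate yields \eqref{eq: estimate with riemannian metric} with $C := 2 c^{-1} \sup_{p \in K} |\mathbf{t}(p)|$. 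The only real obstacle is securing the uniform positivity of $c$, and this is resolved by the compactness of $\mathcal{A}$ together with $\mathbf{t}$ being a temporal function.
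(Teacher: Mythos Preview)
Your proof is correct and takes a genuinely different route from the paper's. The paper argues locally: it covers $K$ by causally convex charts $U_i$ carrying coordinates $\varphi^i_1,\dots,\varphi^i_n$ with timelike gradients, builds on each $U_i$ an auxiliary Riemannian metric $h^i$ for which $\sqrt{h^i(\dot\gamma,\dot\gamma)}\leq\sum_j \d\varphi^i_j(\dot\gamma)$, integrates using monotonicity of each $\varphi^i_j\circ\gamma$, and then patches via a partition of unity and local Lipschitz equivalence of Riemannian metrics. You instead invoke a single global smooth steep (hence temporal) function $\mathbf t$ from \cite{Minguzzisteep} and reduce everything to the compactness of the $h$-unit sphere bundle over $K$ intersected with the closed future cone, obtaining the pointwise comparison $\d\mathbf t\geq c\,\|\cdot\|_h$ directly. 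Your argument is shorter and avoids any patching or change of metric, at the price of relying on Minguzzi's global existence theorem---but since that result is already used elsewhere in the paper, this is a perfectly reasonable trade-off. The paper's approach, in turn, keeps the ingredients more elementary (only local temporal coordinates are needed) and makes the constant somewhat more explicit in terms of the chosen charts.
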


\begin{proof}
    Let $U_i$ for $i=1 , \dots k$ be a finite open covering of $K$ given by causally convex sets. Pick $\varphi^i_1, \dots , \varphi^i_n$ local coordinates such that $\nabla \varphi^i_j$ are timelike vector fields on $U_i$ for $j=1, \dots n$ and define the Riemannian metric $h^i$ on $U_i$ by declaring that the vector fields are orthonormal and with unit modulus. Let $\gamma: [0,1] \rightarrow U_i$ be a causal curve, then at a.e. $t \in [0,1]$ we have that
    \begin{align*}
        \sqrt{h^i_{\gamma_t} (\dot \gamma_t, \dot \gamma_t)} = \left [ \sum_{j=1}^n \d \varphi^i_j ( \dot \gamma_t )^2 \right ]^{\frac{1}{2}} \leq \sum_{j=1}^n \d\varphi^i_j (\dot \gamma_t),
    \end{align*}
    having used that $\d\varphi^i_j(\dot\gamma_t)\geq 0$ for every $i,j$.     Since any $t \mapsto \varphi^i_j(\gamma_t)$ is non-decreasing we have that
    \begin{align*}
        \int_0^1 \sqrt{h^i_{\gamma_t} (\dot \gamma_t, \dot \gamma_t)} \, \d t \leq \sum_{j=1}^n \varphi^i_j (\gamma_1) - \varphi^i_j(\gamma_0) \leq 2n \max_{j=1 , \dots n} \|\varphi^i_j\|_{L^{\infty}(U_i)} \, .
    \end{align*}
    Let now $\rho_i$ for $i=1, \dots k$ be partitions of unity subordinated to the open covering $U_i$ and define $h_x: = \sum_{i=1}^n \rho_i(x) h^i_x$. It is clear that $h$ is a Riemannian metric on $K$ that can be extended to to the whole $M$. Let $\gamma:[0,1] \rightarrow K$ be a causal curve, it is clear that it can be written as concatenation of at most $k$ causal curves, where each one is supported on a different neighborhood $U_i$. In particular
    \begin{align*}
        \int_0^1 \sqrt{h_{\gamma_t} (\dot \gamma_t, \dot \gamma_t)} \, \d t \leq \sum_{i=1}^k \int_0^1 \sqrt{\rho_i(\gamma_t) \, h^i_{\gamma_t}(\dot \gamma_t, \dot \gamma_t) } \, \d t \leq 2kn \max _{i,j} \|\sqrt{\rho_i} \|_{L^{\infty}(K)} \|\varphi_j\|_{L^{\infty}(U_i)}.
        \end{align*}
    The conclusion follows from the fact that any two Riemannian metrics are locally Lipschitz equivalent.
\end{proof}

Now we state and prove the first result about the causal continuity inequality, namely part (A) of Theorem \ref{Thm: main}. Here and below, given a Borel probability measure $\mu$, a causal vector field $v$ and  $0 \neq p <1$ we define $\|v\|_{L^p_\mu}$ as
\[
\|v\|_{L^p_\mu}:=\Big(\int |v|^p\,\d\mu\Big)^{\frac1p},
\]
being intended that $0^p$ is equal to $0$ if $p\in(0,1)$ and to $+\infty$ if $p<0$. We have:
\begin{proposition}
\label{prop: part (A) in glob hyper}
Let $(M,g)$ be a globally hyperbolic spacetime and let $0 \neq p <1$. 
Let $[0,1] \ni t \mapsto \mu_t \in \Prob(M)$ be a causal path such that $\supp( \mu_t) \subset K$ for every $t \in [0,1]$, where $K$ is a compact subset of $M$. 
Suppose that $\int_0^1u_p\left ( | \dot \mu_t |_p \right )\,\d t>-\infty$.
 
Then there exists a family of causal Borel vector fields $(v_t)_{t \in [0,1]}$ such that: \begin{itemize}
\item[\emph{i)}] $ u_p \left ( \| v_t \|_{L^p_{\mu_t}} \right ) \geq u_p\left ( | \dot \mu_t |_p \right )$ holds for almost every $t \in [0,1]$,
\item[\emph{ii)}] the couple $(\mu_t, v_t)$ satisfies the causal continuity inequality \eqref{eq: causal continuity inequality}.
\end{itemize}
\end{proposition}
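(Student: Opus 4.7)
The plan is to feed the Lifting Theorem \ref{Th:Lifting} into a conditional averaging construction for the velocities. The theorem produces $\bdpi \in \Prob(\CC([0,1];M))$ with $(\e_t)_\#\bdpi = \mu_t$ and $\int_0^1 u_p(|\dot\mu_t|_p)\,\d t = \iint_0^1 u_p(\|\dot\gamma_t\|_g)\,\d t\,\d\bdpi$. For a.e.\ $t \in [0,1]$ the tangent map $\gamma \mapsto \dot\gamma_t$ is well-defined $\bdpi$-a.e.\ (using that $\bdpi$-a.e.\ curve is differentiable a.e.\ by \cite[Theorem A.1]{Octet} together with Fubini), so I form the pushforward $\tilde\mu_t$ of $\bdpi$ under $\gamma \mapsto (\gamma_t, \dot\gamma_t) \in TM$, disintegrate $\tilde\mu_t = \int \tilde\mu_{t,x}\,\d\mu_t(x)$ over the bundle projection $TM \to M$, and define
\[
v_t(x) := \int w \, \d\tilde\mu_{t,x}(w) \in T_xM,
\]
setting $v_t \equiv 0$ on the exceptional null set of $t$. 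Since the future-directed causal vectors in $T_xM$ form a closed convex cone, the barycenter $v_t(x)$ is itself future-directed causal, so $v_t$ is a causal vector field.

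For the causal continuity inequality I would work in distributional form via Remark \ref{rem: distributional formulation}. Fix a smooth causal $\varphi$. For $\bdpi$-a.e.\ $\gamma$, the function $t \mapsto \varphi(\gamma_t)$ is non-decreasing, hence its distributional derivative is a non-negative Radon measure on $(0,1)$ that dominates its absolutely continuous part $\d\varphi_{\gamma_t}(\dot\gamma_t)\,\d t$. Testing against $\zeta \in C_c^\infty(0,1)$ with $\zeta \geq 0$, integrating by parts for fixed $\gamma$, and then averaging in $\bdpi$ via Fubini gives
\[
-\int_0^1 \zeta'(t) \Phi(t)\,\d t \;\geq\; \int_0^1 \zeta(t)\int \d\varphi_x(v_t(x))\,\d\mu_t(x)\,\d t,
\]
which is exactly \eqref{eq: distributional causal continuity inequality}. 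For the norm bound, Jensen applied twice does it: the reverse Cauchy--Schwarz inequality makes $\|\cdot\|_g$ superadditive on the causal cone, so $\|v_t(x)\|_g \geq \int \|w\|_g\,\d\tilde\mu_{t,x}(w)$, and then concavity together with monotonicity of $u_p$ yield
\[
\int u_p(\|v_t(x)\|_g)\,\d\mu_t(x) \;\geq\; \int u_p(\|\dot\gamma_t\|_g)\,\d\bdpi(\gamma) \;=\; u_p(|\dot\mu_t|_p)
\]
for a.e.\ $t$, where the final equality is the pointwise-in-$t$ identity of the Lifting Theorem, valid because our hypothesis $\int_0^1 u_p(|\dot\mu_t|_p)\,\d t > -\infty$ ensures that $u_p(|\dot\mu_t|_p) > -\infty$ a.e.

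The main obstacle I anticipate is measurability: curves in $\CC([0,1];M)$ are merely left-continuous and may have jumps, so the joint map $(t,\gamma) \mapsto \dot\gamma_t$ is only defined off a joint null set, and upgrading the per-$t$ disintegration to a genuine Borel family of causal vector fields $v_t$ (well-defined for \emph{every} $t$, as Definition \ref{def: supersolution in the causal sense} asks) requires careful bookkeeping. The uniform $h$-length bound of Lemma \ref{lem: control on the derivative} provides the integrability needed to invoke Fubini without reservation in the averaging and distributional steps; once measurability is pinned down and $v_t$ is extended by zero where the construction fails, the rest of the argument is the routine chain of Jensen, FTC for monotone functions, and Fubini sketched above.
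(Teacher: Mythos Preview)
Your proposal is correct and follows essentially the same strategy as the paper: apply the Lifting Theorem, define $v_t(x)$ as the barycenter of $\dot\gamma_t$ under the disintegration of $\bdpi$ over $\e_t$, use Lemma \ref{lem: control on the derivative} for integrability of the barycenter, and derive (i) via Jensen and (ii) from the monotonicity of $t\mapsto\varphi(\gamma_t)$. The only cosmetic difference is that the paper establishes \eqref{eq: causal continuity inequality} pointwise via Fatou on the incremental quotients of $\Phi$ (and relegates your distributional argument to Remark \ref{rem: first proof on part A in Minkowski}), whereas you go through Remark \ref{rem: distributional formulation} directly; both routes are equivalent.
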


\begin{proof}
Notice that if we modify the path $t \mapsto \mu_t$ on a negligible set of times, the family of Borel vector fields satisfying $(i)$ and $(ii)$ remains unchanged. Hence, from now on, we replace the path $t \mapsto \mu_t$ with its left-continuous representative (recall \cite[Proposition 2.28]{Octet}). Using the Lifting Theorem \ref{Th:Lifting}, we can find $\bdpi \in \Prob (\LCC([0,1]; M))$ such that $(\e_t)_{\#} \bdpi = \mu_t$ and
\begin{equation} \label{eq: pointise energy identity}
u_p \left ( | \dot \mu_t |_p \right ) = \int u_p \left ( \| \dot \gamma _t \|_g \right ) \, \d \bdpi (\gamma)
\end{equation}
for almost every $t \in [0,1]$. 
We define the Borel map $D_t : \LCC([0,1]; M) \rightarrow TM$ by
\begin{equation}
\LCC([0,1]; M) \ni \gamma \mapsto D_t(\gamma) =  \begin{cases}
( \gamma_t , \dot \gamma_t) & \text{if $\gamma$ is differentiable at $t$} \, , \\
(\gamma_t, 0) & \text{otherwise} \, . 
\end{cases}    
\end{equation}
Let $\Pr : TM \rightarrow M$ be the canonical projection. 
Then we have that
$\Pr _{\#} \left ( (D_t)_{\#} \bdpi \right ) = \left ( \Pr \circ D_t \right )_{\#} \bdpi = (\e_t)_{\#} \bdpi = \mu_t$. 
We disintegrate $(D_t)_{\#} \bdpi \in \Prob(TM)$ with respect to $\Pr$ 
and get
\begin{equation}
(D_t )_{\#} \bdpi = \int \left ( (D_t)_{\#} \bdpi \right )_x \d \mu_t(x) \, .
\end{equation}
We recall that $\left ( (D_t)_{\#} \bdpi \right )_x$ is concentrated on $T_xM = \Pr^{-1}(\{x\})$ for $\mu_t$-almost every $x \in M$. 
If we now also disintegrate $\bdpi$ with respect to $\Pr \circ D_t$, then, by uniqueness of the disintegration, we obtain $ (D_t)_{\#} \bdpi _x = \left ( (D_t)_{\#} \bdpi \right )_x$ for $\mu_t$-almost every $x \in M$. 
We aim to define $v_t(x)$ as the barycenter of the measure $\left ( (D_t)_{\#} \bdpi \right )_x$ defined on the tangent space $T_xM$, i.e.,  
\begin{equation} \label{eq: barycenter}
v_t(x) : = \int _{T_xM} v \, \d \left ( (D_t)_{\#} \bdpi \right )_x (v)= \int D_t(\gamma) \, \d \bdpi_x (\gamma) \qquad \mu_t-a.e.\ x .
\end{equation}
We need to show that the definition in \eqref{eq: barycenter} is well posed. To see this, it is sufficient to prove that, for some auxiliary Riemannian metric $h$ on $M$ we have that
\[
\int \|v\|_h \, \d \left ( (D_t)_{\#} \bdpi \right )_x (v) = \int \|\dot \gamma_t \|_h \, \d \bdpi_x (\gamma) < \infty\qquad \mu_t-a.e.\ x,
\]
 where $\|v \|_h:= \sqrt{h_x(v,v)}$ for $v \in T_xM$. 
Using Lemma \ref{lem: control on the derivative}, there is $C>0$ such that
\begin{equation} \label{eq: energy estimate}
\int_0^1 \int \int \| \dot \gamma_t \|_h \, \d \bdpi_x (\gamma) \, \d \mu_t(x) \d t \leq C < \infty,
\end{equation}
and so we have that $\int \| \dot \gamma_t \|_h \, \d \bdpi_x (\gamma) < \infty$ for $\mu_t$-almost every $x$ and almost every $t \in [0,1]$. \\
We notice that $v_t(x)$ is a causal vector for $\mu_t$-almost very $x \in M$. 
Indeed, for every causal vector $w \in T_xM$ we have that
\[
g_x(w, v_t(x)) = \int g_x(w , \dot \gamma_t) \, \d \bdpi_x (\gamma) \geq 0 \, ,
\]
having used that  $\bdpi_x$ is concentrated on causal curves. 
It is clear from the construction that, up to setting $v_t=0$ on a set of 
measure zero with respect to $\mu_t$, $v_t$ is a causal Borel vector field such that \eqref{eq: barycenter} holds for almost every $t \in [0,1]$ and $\mu_t$-almost every $x \in M$. 

Notice that $i)$ follows as a consequence of the identity in \eqref{eq: pointise energy identity} and Jensen inequality, indeed
\begin{align*}
u_p \left ( \| v_t \|_{L^p_{\mu_t}} \right ) & = \int u_p \left ( \| v_t (x)\|_g \right ) \, \d \mu_t(x) =  \int u_p \left ( \left \| \int D_t(\gamma) \d \bdpi_x(\gamma) \right \|_g \right ) \d \mu_t(x) \\
& \geq\int \int u_p \left ( \| D_t(\gamma) \|_g \right ) \, \d \bdpi _x (\gamma) \d \mu_t(x) = \int u_p \left ( \| \dot \gamma_t \|_g \right ) \, \d \bdpi (\gamma) = u_p \left ( | \dot \mu_t |_p \right ) \,.
\end{align*}
Now we are left to prove $ii)$, namely that the couple $(\mu_t, v_t)$ satisfies \eqref{eq: causal continuity inequality}. Let $\varphi \in C^{\infty}(M)$ be a causal function and define $\Phi :[0,1] \rightarrow \R$ by $\Phi(t) = \int \varphi \d \mu_t$. 
Notice that $\Phi$ is a bounded as $\| \Phi \|_{L^{\infty}} \leq \sup_K |\varphi |$ and non-decreasing (as discussed after \eqref{eq:Phi}). 
Hence $\Phi$ is differentiable almost everywhere by Lebesgue Differentiation Theorem and for a.e. $t \in [0,1]$ we can apply Fatou Lemma, the standard chain rule and Fubini's Theorem to get
\begin{align*}
\Phi '(t) & = \varliminf_{h \downarrow 0} \frac{\Phi(t+h)-\Phi(t)}{h} = \varliminf_{h \downarrow 0} \int \frac{\varphi(\gamma_{t+h})- \varphi(\gamma_t)}{h} \d \bdpi(\gamma) \\
& \geq \int \varliminf_{h \downarrow 0} \frac{\varphi(\gamma_{t+h})- \varphi(\gamma_t)}{h} \d \bdpi(\gamma) = \int \d\varphi_{\gamma_t} (\dot \gamma_t) \, \d \bdpi(\gamma).
\end{align*}
Thus from
\begin{align*}
\int \d\varphi_{\gamma_t} (\dot \gamma_t) \, \d \bdpi(\gamma) & = \int \d \varphi_x(v) \d (D_t)_{\#}\bdpi(x,v) \\
& = \int \int_{T_xM} \d \varphi_x(v) \d (D_t)_{\#} \bdpi_x(v)\d \mu_t(x) \\
& = \int \d \varphi_x \left( \int_{T_xM}v \d (D_t)_{\#} \bdpi_x(v) \right) \d \mu_t(x)  = \int\d\varphi_x(v_t(x)) \d \mu_t(x) \, 
\end{align*}
the conclusion follows.
\end{proof}

\begin{remark} \label{rem: first proof on part A in Minkowski}
Here we explicitly compute the left hand side of \eqref{eq: causal continuity inequality} in the simplified but meaningful case $M= \R^{1,n}$ expanding the computations of Remark \ref{rem: distributional formulation}. Let $t \mapsto \mu_t \in \Prob (M)$ be a causal path and denote by $v_t:M \rightarrow TM$ the causal Borel vector fields constructed in the proof of Proposition \ref{prop: part (A) in glob hyper}. 

Let $\varphi \in C^{\infty}(M)$ be a causal function and consider the bounded non-decreasing function $\Phi(t) = \int \varphi \d \mu_t$. In particular, the derivative in the sense of distributions $D\Phi$ is a non-negative finite measure on $[0,1]$. Using Lebesgue's Decomposition Theorem we can uniquely decompose 
\begin{equation} \label{eq: RD one}
D \Phi = \Phi' (t)  \d t +  D\Phi^s 
\end{equation} 
into its absolutely continuous and singular parts, both of which are non-negative since $D \Phi$ is non-negative. 
Our next step is to investigate these parts further. 
Using the definition of the distributional derivative and Fubini's Theorem, it is easy to check that $D \Phi = \int D(\varphi \circ \gamma) \, \d \bdpi(\gamma)$. 
Hence by a simple application of Vol'pert Chain Rule Theorem \cite[Theorem 3.96]{AmbrosioFuscoPallara} to $D(\varphi \circ \gamma)$ we have that
\begin{equation} 
\label{eq: 1 part (A) Minkowski}
D \Phi = \int \d\varphi _{\gamma_t} ( \dot \gamma_t ) \, \d \bdpi ( \gamma) \d t+ \int D(\varphi \circ \gamma)^s \, \d \bdpi (\gamma) \, ,
\end{equation}
where $D(\varphi \circ \gamma)^s$ denotes the singular part in the Lebesgue decomposition of $D(\varphi \circ \gamma)$.\\
Since $\int D(\varphi \circ \gamma)^s \, \bdpi (d\gamma)$ is a non-negative finite measure, we can use Lebesgue decomposition once more to write 
\begin{equation} \label{eq: 2 part (A) Minkowski}
\int D(\varphi \circ \gamma)^s \, \d\bdpi (\gamma) = \psi (t) \d t + \nu^s \, ,
\end{equation}
where $\psi$ is the Radon-Nikodym density and $\nu^s$ is the singular part. 
Note that $\psi$ is a non-negative function almost everywhere and $\nu^s$ is a non-negative measure. 
Putting together \eqref{eq: 1 part (A) Minkowski} and \eqref{eq: 2 part (A) Minkowski} we have 
\begin{equation} \label{eq: 2 RD dec}
    D\Phi = \left ( \int \d\varphi_{\gamma_t}(\dot \gamma_t) \, \d \bdpi (\gamma) + \psi (t) \right ) \d t + \nu^s.
\end{equation}
Hence \eqref{eq: RD one} and \eqref{eq: 2 RD dec} are two Lebesgue decompositions of $D\Phi$, and by uniqueness the absolutely continuous parts must agree almost everywhere. 
Thus, for almost every $t \in [0,1]$ we have
\[
\Phi'(t) - \int \d\varphi_{\gamma_t}  (\dot \gamma_t) \, \d \bdpi(\gamma) = \psi(t) \geq 0 \, .
\]
Moreover, out of the identity \eqref{eq: 2 part (A) Minkowski} we obtain that $\psi (t) = \left [ \int D (\varphi \circ \gamma)^s \, \d \bdpi (\gamma) \right ]^{ac} (t)$ for almost every $t \in [0,1]$. 
\hfill$\blacksquare$
\end{remark}

The following simple example highlights what gets `lost' along the way in the proof to end up only with an continuity inequality instead of a continuity equation as in positive signature. 

\begin{example} \label{example}
Let $x \leq y$ be two causally related points in $\R^{1,n}$ and consider the map $[0,1] \ni t \mapsto \mu_t = (1-t) \delta_x + t \delta_y \in \Prob (\R^{1,n})$. 
Notice that this curve is continuous with respect to the narrow topology on $\Prob (\R^{1,n})$. 
Moreover, it is causal and 
\[
u_p (\ell_p(\mu_s, \mu_t)) =    \begin{cases}
    \frac{1}{p} \|y-x\|^p (t-s) & p \in (0,1) \\
    - \infty & p <0 \, .
\end{cases}
\]
It is immediate to check the lifting plan $\bdpi \in \Prob(\LCC([0,1], \R^{1,n})$ constructed in Theorem \ref{Th:Lifting} is given by
\[
\bdpi =G _{\#} (\mathcal{L}^1 \mres [0,1]) \, , 
\]
where $G: [0,1] \rightarrow     \LCC([0,1], \R^{1,n})$ is given by 
\[
G(s) = \gamma^{s} , \qquad \gamma^{s}_t = \gamma^{s}(t) = \begin{cases}
  x & 0 \leq t \leq s \\
  y & s < t \leq 1 \, .
\end{cases}
\]
We observe that, given a smooth causal function $\varphi \in C^{\infty}( \R^{1,n})$, it is clear that $\Phi(t) = \int \varphi \d \mu_t = (1-t) \varphi (x) +t \varphi(y)$ and so 
\[
\Phi'(t) = \varphi(y)- \varphi(x) \, .
\]
Moreover we can compute the family of Borel vector fields constructed in Proposition \ref{prop: part (A) in glob hyper} and we end up with $v_t(x) =0$ for $\mu_t$-almost everywhere. \hfill$\blacksquare$
\end{example}

\subsection{A Hopf--Lax type formula}

In this subsection we will develop the machinery to show the part (B) of Theorem \ref{Thm: main}. Starting from now, we will not only work with $0 \neq p <1$, but also with its Hölder conjugate, which we will always denote by $q$. 
That is, $0 \neq q <1$ and $p^{-1} + q^{-1} = 1$. 

\begin{definition}[Lorentzian Hopf--Lax semigroup]
Let $(M,g)$ be a globally hyperbolic spacetime and let $0 \neq p <1$. 
Let $E \subset M$ be a compact subset and let $f: E \rightarrow \R$ be a steep function. 
For $t \in (0,1]$, we define $Q_t^p f:E\to\overline\R$ as
\begin{equation} 
\label{eq: Hopf--Lax Semigroup}
Q_t^p f(y) := \sup_{J^-(y)\cap E} \left \{ f(x) +t \,  u_p \left( \dfrac{\ell(x,y)}{t} \right) \right \}.
\end{equation}
\end{definition}

Next we show some structural properties of the Lorentzian Hopf--Lax semigroup in analogy to the Riemannian setting, see for instance \cite[Theorem 16.10]{AmbrosioBrueSemola}. 
\begin{lemma}[Properties of $Q_t^p$]
\label{lem: properties of Hopf--Lax}
Let $(M,g)$ be a globally hyperbolic spacetime and let $ 0 \neq p <1$. 
Let $K \subset M$ be a compact subset and let $f: E \rightarrow \R$ be a continuous $L$-steep function for some $L > 0$, where $E$ is a compact neighborhood of $K$ as in Lemma \ref{lem: finding emralds}. 
Then: 
\begin{enumerate}
\item[(o)] For any $y\in E$ and $t>0$ the supremum in \eqref{eq: Hopf--Lax Semigroup} is a maximum.
\item[(i)] For any $t \in (0,1]$ we have that $Q_t^p f : K \rightarrow \R$ is continuous and causal. Moreover, for any $y \in K$ we have that
\begin{equation} \label{eq: Hopf Lax on a smaller set}
Q_t^p f(y) = \sup \left \{f(x) +t \,  u_p \left( \dfrac{\ell(x,y)}{t} \right ) : x \in E\, , \, 0 < \ell(x,y) \leq t L^{\frac{1}{p-1}} \right \} \, .
\end{equation}
\item[(ii)] $Q_t^p f : K \rightarrow \R$ is an $L$-steep function for any $t \in (0,1]$.
\item[(iii)] For every $y \in E$, the map $(0,1] \ni t \mapsto Q_t^K f(y)$ is non-decreasing for $p \in (0,1)$ and non-increasing for $p <0$. 
Moreover, for every $\varepsilon>0$, the map $[\varepsilon,1] \ni t \mapsto Q_t^pf(y)$ is Lipschitz continuous uniformly in $y \in K$, with Lipschitz constant bounded by $L,p,\eps$ only.
\item[(iv)] For every $y \in K$, we have that $Q_t^pf(y) \downarrow f(y)= Q_0^pf(y)$ for $t \downarrow 0$ if $p \in (0,1)$, and $Q_t^pf(y) \uparrow f(y) = Q_0^pf(y)$ for $t \rightarrow 0^+$ if $p <0$. 
In particular $Q_t f$, converges to $f$ uniformly on $K$.
\end{enumerate}

\end{lemma}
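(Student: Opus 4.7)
The plan is to prove items (o)--(iv) in order, with the bulk of the work lying in items (i) and (ii).

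\textbf{Setup and items (o), (i).} The set $J^-(y)\cap E$ is compact by global hyperbolicity and the emerald structure of $E$ from Lemma~\ref{lem: finding emralds}. The functional $x\mapsto f(x)+tu_p(\ell(x,y)/t)$ is upper semicontinuous on this compact set by continuity of $f$ and of $c_p=u_p\circ\ell$ on $\{\ell\geq0\}$, so the supremum is attained; for $p<0$, Lemma~\ref{lem: finding emralds} provides some $x\in I^-(y)\cap E$ with $\ell(x,y)>0$ to ensure the supremum is not identically $-\infty$. The restriction to $\ell(x,y)\leq tL^{1/(p-1)}$ in~\eqref{eq: Hopf Lax on a smaller set} comes from $L$-steepness of $f$: writing $f(x)\leq f(y)-L\ell(x,y)$, the functional is bounded above by $f(y)+t\,h(\ell(x,y)/t)$ with $h(z):=-Lz+u_p(z)$, and $h'(z)=z^{p-1}-L$, $h''(z)=(p-1)z^{p-2}<0$ show that $h$ is concave with unique maximum at $z^*=L^{1/(p-1)}$, so the functional strictly decreases once $\ell(x,y)/t>z^*$. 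Continuity of $Q_t^pf$ on $K$ follows from a standard upper/lower semicontinuity sandwich exploiting the density $\overline{I^-(y)\cap E}=J^-(y)\cap E$ from Lemma~\ref{lem: finding emralds} (for lower semicontinuity, any $x\in I^-(y)\cap E$ lies in $I^-(y_n)\cap E$ for $n$ large). Causality is immediate from $J^-(y_1)\subset J^-(y_2)$, the reverse triangle $\ell(x,y_2)\geq\ell(x,y_1)$ for $x\leq y_1\leq y_2$, and the monotonicity of $u_p$.

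\textbf{Item (ii), the crux.} Fix $y_1\leq y_2$ in $K$. The plan is to test $Q_t^pf(y_2)$ along a maximizing causal geodesic $\gamma$ from $y_1$ to $y_2$: with $x=\gamma_s$, $L$-steepness of $f$ gives $f(\gamma_s)\geq f(y_1)+Ls$ while $\ell(\gamma_s,y_2)=\ell(y_1,y_2)-s$, so
\[
Q_t^pf(y_2)\geq\sup_{s\in[0,\ell(y_1,y_2)]}\bigl\{f(y_1)+Ls+tu_p((\ell(y_1,y_2)-s)/t)\bigr\}.
\]
Optimization gives the critical point $s^*=\ell(y_1,y_2)-tL^{1/(p-1)}$; when $s^*\geq 0$ (large-distance regime), a direct algebraic computation using $1/p-1=-1/q$ yields $Q_t^pf(y_2)\geq f(y_1)+L\ell(y_1,y_2)+t\,h(z^*)$, which combined with the universal upper bound $Q_t^pf(y_1)\leq f(y_1)+t\,h(z^*)$ (obtained from $L$-steepness inside the sup, exactly as in item~(i)) gives the steepness. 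The main obstacle is the short-distance regime $\ell(y_1,y_2)<tL^{1/(p-1)}$ where $s^*<0$ and the upper bound on $Q_t^pf(y_1)$ is no longer tight; one instead picks a near-maximizer $x$ of $Q_t^pf(y_1)$, uses $Q_t^pf(y_2)\geq f(x)+tu_p(\ell(x,y_2)/t)$ and the reverse triangle $\ell(x,y_2)\geq\ell(x,y_1)+\ell(y_1,y_2)$, and applies a mean value argument exploiting the concavity of $u_p$ and the range bound $\ell(x,y_1)/t\leq z^*$ from~(i) to close the case.

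\textbf{Items (iii) and (iv).} For~(iii), computing $\partial_t(tu_p(s/t))=-(s/t)^p/q$ shows the sign of monotonicity is that of $-1/q$ (positive for $p\in(0,1)$, so $Q_t^pf$ is non-decreasing in $t$; negative for $p<0$, so it is non-increasing); this is inherited by the sup defining $Q_t^pf$. The range bound $\ell(x,y)/t\leq z^*$ from~(i) bounds $(s/t)^p$ uniformly at maximizers for $t\in[\eps,1]$ and $y\in K$; an envelope/Danskin-type argument then yields the uniform Lipschitz constant depending only on $L,p,\eps$. For~(iv), the two-sided bound $|Q_t^pf(y)-f(y)|\leq t|h(z^*)|$ (derived in~(i) from the lower bound $x=y$ for $p\in(0,1)$, or $x\in I^-(y)\cap E$ approaching $y$ combined with $u_p(+\infty)=0$ for $p<0$) yields pointwise convergence $Q_t^pf(y)\to f(y)$ as $t\downarrow0$, with direction dictated by the sign of $h(z^*)=-L^{p/(p-1)}/q$. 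Uniform convergence on $K$ follows from Dini's theorem applied to the monotone family of continuous functions $Q_t^pf$ on the compact set $K$ converging to the continuous limit $f$.
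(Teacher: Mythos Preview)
Your sketch has the right architecture, but there are two genuine gaps.

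\textbf{Gap in (i), the range bound.} From $f(x)\le f(y)-L\ell(x,y)$ you correctly obtain that the functional $F(x):=f(x)+tu_p(\ell(x,y)/t)$ is bounded above by $f(y)+th(\ell(x,y)/t)$, and that this \emph{upper bound} is strictly decreasing once $\ell(x,y)/t>z^*$. But this does not force the maximizer of $F$ to satisfy $\ell(x,y)/t\le z^*$: an upper bound decreasing says nothing about where $F$ itself is maximized. The paper closes this by a geodesic argument: if $x$ is a maximizer with $\ell(x,y)>0$, let $\gamma$ be a maximizing geodesic from $x$ to $y$ (contained in $E$ by causal convexity) and use $F(x)\ge F(\gamma_\lambda)$ together with $f(\gamma_\lambda)-f(x)\ge L\lambda\ell(x,y)$ to obtain, after dividing by $\lambda$ and sending $\lambda\downarrow0$, the inequality $u_p'(\ell(x,y)/t)\ge L$, i.e.\ $\ell(x,y)/t\le z^*$.

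\textbf{Gap in (ii), the short-distance regime.} With $a:=\ell(x,y_1)/t$ and $b:=\ell(y_1,y_2)/t$, your estimate gives
\[
Q_t^pf(y_2)-Q_t^pf(y_1)\ \ge\ t\bigl(u_p(a+b)-u_p(a)\bigr)\ =\ tb\,\xi^{p-1}
\]
for some $\xi\in(a,a+b)$ by the mean value theorem. You want $\xi^{p-1}\ge L$, i.e.\ $\xi\le z^*$. But the range bound from (i) only gives $a\le z^*$; since $b>0$, one can have $a+b>z^*$ and hence $\xi>z^*$, so the argument does not close. (Concavity only gives $\xi^{p-1}\le a^{p-1}$, which is the wrong direction.) The paper avoids the case split entirely: from the same estimate it takes $y_2\downarrow y_1$, so $b\to0$ and $\xi\to a\le z^*$, obtaining the \emph{infinitesimal} steepness
\[
\varliminf_{y_2\downarrow y_1}\frac{Q_t^pf(y_2)-Q_t^pf(y_1)}{\ell(y_1,y_2)}\ \ge\ u_p'(a)\ \ge\ L,
\]
and then integrates the monotone function $s\mapsto Q_t^pf(\gamma_s)$ along a maximizing geodesic from $y_1$ to $y_2$. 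Your large-distance argument is also fixable: apply it with the geodesic from the maximizer $x$ of $Q_t^pf(y_1)$ to $y_2$ (rather than from $y_1$ to $y_2$); then the optimization yields $Q_t^pf(y_2)-Q_t^pf(y_1)\ge Ltb+t(h(z^*)-h(a))\ge Ltb$ since $h$ is maximized at $z^*$.

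\textbf{Minor point in (iv).} For $p<0$ the claimed two-sided bound $|Q_t^pf(y)-f(y)|\le t|h(z^*)|$ is not correct: your upper bound gives $Q_t^pf(y)-f(y)\le th(z^*)<0$, but the lower bound via $x\to y$ fails because $u_p(\ell(x,y)/t)\to u_p(0)=-\infty$. The paper instead fixes $z\ll y$ and uses $Q_t^pf(y)\ge f(z)+tu_p(\ell(z,y)/t)\to f(z)$ as $t\downarrow0$ (here $u_p(+\infty)=0$ is used with $t\to0$, not $x\to y$), then takes the sup over $z$. Your Dini argument for uniform convergence is fine once monotone pointwise convergence is in place.
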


\begin{proof} \ 

(o) Direct consequence of the continuity of $f$ on $E$ and of $u_p(\ell(\cdot,y)): J^-(y)\cap E\to\R\cup\{-\infty\}$.

(i) Let $y \in K$ and $t \in (0,1]$. 
First we show that $Q_t^p f$ is bounded. 
Indeed, for $p \in (0,1)$ we can test \eqref{eq: Hopf--Lax Semigroup} with $x=y$ in order to get $f(y) \leq Q_t^p f(y)$. 
If $p <0$, since $E$ is a compact neighborhood of $K$ there exists $E \ni \overline{y} \ll y$ and so
\[
- \infty < f( \overline{y}) + t \,u_p \left ( \tfrac{\ell(\overline{y},y)}{t} \right ) \leq Q_t^p f(y) \, .
\]
An upper bound can easily be established noticing that $\ell$, and thus $u_p\circ\ell$, is bounded from above on $E\times E$.

The continuity and causality of $Q_t^pf$ on $K$ follows in the same way of Lemma \ref{lem: basic properties of c-transform}. Next we will prove \eqref{eq: Hopf Lax on a smaller set}. Notice that we can restrict the domain in \eqref{eq: Hopf--Lax Semigroup} to $I^-(y) \cap E$ because of the choice of $E$, Lemma \ref{lem: finding emralds} and the continuity of the function to be maximized. To prove \eqref{eq: Hopf Lax on a smaller set} it thus suffices to show that for  $y \in K$ and $x \in J^-(y) \cap E$ maximizer of \eqref{eq: Hopf--Lax Semigroup} we have $\ell(x,y)\leq tL^{\frac1{p-1}}$. For such $y,x$, we can assume that $\ell(x,y)>0$ (or else there is nothing to prove) and then let $\gamma:[0,1]\to M$ be a maximizing geodesic from $x$ to $y$. Notice that $E$ is an emerald by construction, hence causally convex, and so $\gamma_{\lambda} \in E$ for all $\lambda \in [0,1]$. Thus we have
\begin{equation} \label{eq: p negative restriction hopf lax}
f(x) + t \, u_p \left ( \tfrac{\ell(x,y)}{t} \right ) \geq f(\gamma_{\lambda}) + t \, u_p \left ( \tfrac{\ell( \gamma_{\lambda}, y)}{t} \right )\qquad\forall \lambda\in[0,1]
\end{equation}
and rearranging
\begin{align*}
\frac{1-(1-\lambda)^p}{p t^{p-1}} \, \ell(x,y)^p & =  
t \, u_p \left ( \tfrac{\ell(x,y)}{t} \right ) - t \, u_p \left ( \tfrac{\ell( \gamma_{\lambda}, y)}{t} \right )  \geq f(\gamma_{\lambda}) - f(x) \geq L \ell (x, \gamma_{\lambda}) = L \lambda \, \ell (x,y) \, .
\end{align*}
Dividing by $\lambda$ and letting $\lambda\downarrow0$ we get the $\ell(x,y)\leq tL^{\frac1{p-1}}$, as desired.

(ii) Let $y_1 \leq y_2$ in $K$ and pick $x \in J^-(y_1) \cap E$ to be a maximizer of \eqref{eq: Hopf--Lax Semigroup}. By $(i)$ we know that $\ell(x,y_1) \leq t L^{\frac{1}{p-1}}$. Then, using the reverse triangle inequality and the monotonicity of $u_p$ we compute
\begin{align*}
    Q_t^p f(y_2) - Q_t^p f(y_1) & \geq f(x) + t u_p \left ( \frac{\ell(x,y_2)}{t} \right ) - f(x) - t u_p \left ( \frac{\ell(x,y_1)}{t} \right ) \\
    & \geq t u_p \left ( \frac{\ell(x,y_1) + \ell(y_1,y_2)}{t} \right ) - t u_p \left ( \frac{\ell(x,y_1)}{t} \right ) \, .
\end{align*}
In particular, we have that
\begin{equation}
\label{eq:ders}
    \varliminf_{y_2 \downarrow y_1} \frac{Q_t^p f(y_2) - Q_t^p f(y_1)}{\ell(y_1, y_2)}  \geq u_p' \left ( \frac{\ell(x,y_1)}{t} \right ) \geq L \, .
\end{equation}
Now we are able to prove that $Q_t^p f$ is an $L$-steep function on $K$. Consider $y_1 \leq y_2$ in $K$ and connect them with a maximizing causal geodesic $\gamma_{\lambda}$ such that $\gamma_0= y_1$ and $\gamma_1 = y_2$. Then the function $s(\lambda) : = Q_t^p f(\gamma_{\lambda})$ is non-decreasing and so 
\[
Q_t^p f(y_2) - Q_t^p f(y_1) = s(1) - s(0) \geq \int_0^1 s'(\xi) \d \xi \stackrel{\eqref{eq:ders}}\geq L\ell(y_1, y_2).
\]

(iii) The non-increasing and non-decreasing monotonicity easily follows from the definition in \eqref{eq: Hopf--Lax Semigroup}. 
We are left to prove the Lipschitz continuity. 
We will prove it for $p \in (0,1)$ since the case $p <0$ is completely analogous after replacing $s$ with $t$ in the computation in \eqref{eq: lipschiz cont proof}. 
Let $y \in K$ be fixed and let $\varepsilon >0$. 
Consider $\varepsilon \leq s\leq t \leq 1$, then, we can find a maximizer $x \in J^-(y) \cap E$ (with $x \ll y$ if $p<0$) for $Q_t^pf$ and thus that
\begin{align} \label{eq: lipschiz cont proof}
Q_t^p f(y) - Q_s^p f(y) & \leq f(x) + t \, u_p \left ( \tfrac{\ell(x,y)}{t} \right ) - f(x) - s \, u_p \left ( \tfrac{\ell(x,y)}{s} \right ) \\
& \leq u_p ( \ell(x,y)) \left [ \frac{1}{t^{p-1}} - \frac{1}{s^{p-1}} \right ] \leq u_p \left ( L^{\frac{1}{p-1}} \right ) \left [ \frac{1}{t^{p-1}} - \frac{1}{s^{p-1}} \right ] \, \notag.
\end{align}
Notice that if we swap the role of $s$ and $t$, the estimate is analogous. 
The conclusion follows by the fact that $t \mapsto t^{1-p}$ is Lipschitz on $[\varepsilon, 1]$. 

(iv) Now we want to prove that for some fixed $y \in K$ the map $t\mapsto Q_t^pf(y)$ monotonically converges to $f(y)$ when $t \downarrow 0$. First of all, by monotonicity we have that such a limit exists. Let $p <1$ and fix some $y \in K$. For every $t \in (0,1]$ let $y_t \in J^-(y) \cap E$ be a maximizer  in the definition of $Q_t^pf(y)$. 
Note that by the $L$-steepness of $f$ we have that $f(y_t) \leq f(y) - L \ell(y_t,y)$ thus, using also Young's inequality, we have 
\[
Q_t^p f(y) \leq f(y) - L \ell(y_t,y) +t \, u_p \left (\tfrac{\ell(y_t, y)}{t} \right ) \leq f(y) - t\,u_q \left ( L    \right)\, .
\]
Then just let $t \to 0^+$ on both sides to get that $\lims_{t\downarrow 0}Q_t^p f(y) \leq f(y)$. We are left to prove the reverse inequality. If $p \in (0,1)$ it is trivial because of $f(y) \leq Q_t^pf(y)$. If $p<0$, consider $z \ll y$ and notice that
\begin{align*}
f(z) + t \, u_p \left ( \tfrac{\ell(z,y)}{t} \right ) \leq Q_t^p f(y)\qquad\forall t>0.
\end{align*}
Let  $t \rightarrow 0^+$ and notice that the left hand side of the previous equation converges to $f(z)$ to conclude that $f(z)\leq\limi_tQ_t^p f(y)$. Since $f$ is continuous, recalling also Lemma \ref{lem: finding emralds}  we have $\sup_{z\in I^-(y)\cap E}f=f(y)$. The conclusion follows.
\end{proof}
Under the same assumption of this last lemma, for $y\in K$ and $t>0$ we define $\Lmax(y,t)\geq 0$ as
\begin{equation}
\label{eq:lmin}
\Lmax(y,t):=\sup \big\{\ell(x,y) \ :\ x\text{ is a maximizer in the definition of }Q_t^pf(y)\big\}.
\end{equation}
By compactness it is clear that the supremum is in fact a maximum. The following is also easily established:
el\begin{lemma}
\label{lem: dmax is lsc}
With the same assumptions and notation of Lemma \emph{\ref{lem: properties of Hopf--Lax}}, the following holds.

For any $t>0$ the map $\Lmax(\cdot,t)$  is upper semicontinuous on $K$.
\end{lemma}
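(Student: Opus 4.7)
The plan is to argue by sequential upper semicontinuity, exploiting compactness of $E$, closure of $\{\ell\geq 0\}$ under global hyperbolicity, and the continuity of $Q_t^pf$ already obtained in Lemma \ref{lem: properties of Hopf--Lax}$(i)$. Fix $t>0$ and a sequence $y_n\to y$ in $K$. Since the supremum in \eqref{eq:lmin} is attained (the set of maximizers is the preimage of $\{Q_t^pf(y_n)\}$ under the upper semicontinuous map $x\mapsto f(x)+tu_p(\ell(x,y_n)/t)$ on the compact $J^-(y_n)\cap E$, hence is compact, and $\ell(\cdot,y_n)$ is continuous there), we can pick $x_n\in J^-(y_n)\cap E$ with $\Lmax(y_n,t)=\ell(x_n,y_n)$ and $f(x_n)+tu_p(\ell(x_n,y_n)/t)=Q_t^pf(y_n)$.

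By compactness of $E$, up to passing to a (non-relabelled) subsequence we may assume $x_n\to x_\infty\in E$ and also that $\Lmax(y_n,t)$ converges to $\lims_n\Lmax(y_n,t)$. Since $\{\ell\geq 0\}$ is closed under global hyperbolicity and $x_n\leq y_n$ with $x_n\to x_\infty$, $y_n\to y$, we deduce $x_\infty\leq y$, i.e.\ $x_\infty\in J^-(y)\cap E$. Continuity of $\ell$ on $\{\ell\geq 0\}$ gives $\ell(x_n,y_n)\to\ell(x_\infty,y)$, and continuity of $f$ gives $f(x_n)\to f(x_\infty)$.

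The key point is to show that $x_\infty$ is itself a maximizer of $Q_t^pf(y)$. Passing to the limit in the identity $f(x_n)+tu_p(\ell(x_n,y_n)/t)=Q_t^pf(y_n)$ and using that $Q_t^pf(y_n)\to Q_t^pf(y)$ by Lemma \ref{lem: properties of Hopf--Lax}$(i)$, we get
\begin{equation*}
Q_t^pf(y)=\lim_n\bigl[f(x_n)+tu_p(\ell(x_n,y_n)/t)\bigr]=f(x_\infty)+tu_p(\ell(x_\infty,y)/t),
\end{equation*}
provided $u_p(\ell(x_n,y_n)/t)\to u_p(\ell(x_\infty,y)/t)$. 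For $p\in(0,1)$ this is immediate from continuity of $u_p$ on $[0,\infty)$. The only delicate case is $p<0$ with $\ell(x_\infty,y)=0$: then $u_p(\ell(x_n,y_n)/t)\to-\infty$ would force $Q_t^pf(y_n)\to-\infty$, contradicting the boundedness of $Q_t^pf$ on $K$ from Lemma \ref{lem: properties of Hopf--Lax}$(i)$; hence this case cannot occur and the convergence holds.

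Therefore $x_\infty$ is a maximizer for $Q_t^pf(y)$, and by definition of $\Lmax$ we conclude
\begin{equation*}
\Lmax(y,t)\geq\ell(x_\infty,y)=\lim_n\ell(x_n,y_n)=\lims_n\Lmax(y_n,t),
\end{equation*}
which is the desired upper semicontinuity. The main, and essentially only, subtle step is ruling out $\ell(x_\infty,y)=0$ in the $p<0$ regime; everything else is a direct consequence of compactness plus the continuity statements already established.
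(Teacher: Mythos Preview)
Your proof is correct and follows essentially the same route as the paper: pick maximizers $x_n$ realizing $\Lmax(y_n,t)$, extract a limit $x_\infty$ by compactness of $E$ and closedness of $\{\ell\geq 0\}$, and use continuity of $Q_t^pf$ on $K$ to conclude that $x_\infty$ is a maximizer for $Q_t^pf(y)$, whence $\Lmax(y,t)\geq\ell(x_\infty,y)=\lims_n\Lmax(y_n,t)$. Your explicit treatment of the possibility $\ell(x_\infty,y)=0$ in the $p<0$ regime is a detail the paper glosses over, so in that respect your write-up is actually more careful.
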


\begin{proof}
    Let $(y_n)\subset K$ be converging to some limit point $y\in K$ and for every $n\in\N$ let $z_n\in J^-(y_n)\cap E$ maximizer for the definition of $Q^p_tf(y_n)$ such that $\ell(z_n,y_n)=\Lmax(y_n,t)$. By compactness of $E$, up to pass to a non-relabeled subsequence we can assume that $(z_n)$ converges to a limit point $z\in E$. Then the continuity of $Q_t^pf$ gives
\[
Q_t^pf(y)=\lim_nQ_t^pf(y_n)=\lim_nf(z_n)+u_p(\ell(z_n,y_n))=f(y)+u_p(\ell(z,y)),
\] 
proving that $z$ is a maximizer for the definition of $Q_t^pf$. The conclusion follows.
\end{proof}

Now we prove a Proposition that links the Lorentzian Hopf--Lax semigroup and a Hamilton--Jacobi type equation. 
In the smooth category, this result is a classic. In the metric one, the analogous statement in positive signature has been obtained in \cite{AmbrosioGigliSavareInventiones} and \cite{NCS14}.
\begin{proposition}
\label{prop: differential inequality of hopf lax with local steep} With  the same assumptions and notation of Lemma \emph{\ref{lem: properties of Hopf--Lax}}, the following holds.

For any $y \in K$ the map $t\mapsto Q_t^p f(y)$ is differentiable almost everywhere and 
\begin{equation} \label{eq: Hamilton jacobi}
\dfrac{\d}{\d t} Q_t^pf(y) + u_q \left ( \sta (Q_t^pf) (y) \right ) \geq 0 
\end{equation}
holds for almost every $t \in [0,1]$, where $0 \neq q<1$ is the Hölder conjugate of $p<1$.
\end{proposition}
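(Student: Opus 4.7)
The plan is to identify the right derivative $\frac{d^+}{dt} Q_t^p f(y)$ explicitly via a maximizer in \eqref{eq: Hopf--Lax Semigroup}, and then to bootstrap from a single maximizer at $y$ to a bound on the asymptotic steepness $\sta(Q_t^p f)(y)$ through the upper semicontinuity of $\Lmax$ established in Lemma \ref{lem: dmax is lsc}.

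The a.e.\ differentiability of $t \mapsto Q_t^p f(y)$ comes for free from part (iii) of Lemma \ref{lem: properties of Hopf--Lax}. At a differentiability time $t$, I would pick a maximizer $x_t$ in the definition of $Q_t^p f(y)$ attaining $\ell(x_t, y) = \Lmax(y, t)$ (existence by compactness), plug it as a competitor into $Q_{t+h}^p f(y)$, and perform the routine one-variable computation
\[
Q_{t+h}^p f(y) - Q_t^p f(y) \geq \tfrac{\Lmax(y,t)^p}{p}\bigl[(t+h)^{1-p} - t^{1-p}\bigr].
\]
Dividing by $h$ and letting $h \downarrow 0$ produces
\[
\tfrac{d}{dt} Q_t^p f(y) \geq \tfrac{1-p}{p}\bigl(\Lmax(y,t)/t\bigr)^p = -u_q\bigl((\Lmax(y,t)/t)^{p-1}\bigr),
\]
where the second equality comes from the algebraic identities $(p-1)q = p$ and $(1-p)/p = -1/q$.

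The crucial ingredient is then the geometric inequality
\[
\sta(Q_t^p f)(y) \geq (\Lmax(y,t)/t)^{p-1}.
\]
Granted this, strict monotonicity of $u_q$ on $(0,\infty)$ (since $u_q'(z) = z^{q-1} > 0$) gives $-u_q((\Lmax(y,t)/t)^{p-1}) \geq -u_q(\sta(Q_t^p f)(y))$ and the proposition follows. To prove the geometric inequality I would fix $\eps,\delta > 0$ and, using Lemma \ref{lem: dmax is lsc} together with the continuity of $\ell$ and the fact that $\ell(y,y) = 0$, select an open neighborhood $U \subset \mathrm{int}(E)$ of $y$ with $\Lmax(\cdot,t) \leq \Lmax(y,t) + \eps$ on $U \cap K$ and with $\ell$-diameter bounded by $\delta$. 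For $y_1 \leq y_2$ in $U \cap K$ with $\ell(y_1,y_2) > 0$, a maximizer $x$ of $Q_t^p f(y_1)$ combined with the reverse triangle inequality $\ell(x,y_2) \geq \ell(x,y_1) + \ell(y_1,y_2)$, monotonicity of $u_p$, and the tangent estimate $u_p(a+h) - u_p(a) \geq h(a+h)^{p-1}$ coming from the concavity of $u_p$, would yield
\[
\frac{Q_t^p f(y_2) - Q_t^p f(y_1)}{\ell(y_1, y_2)} \geq \Bigl(\tfrac{\Lmax(y,t) + \eps + \delta}{t}\Bigr)^{p-1},
\]
where the last step uses that $(\cdot)^{p-1}$ is decreasing (as $p<1$) together with the bounds $\ell(x,y_1) \leq \Lmax(y_1,t) \leq \Lmax(y,t) + \eps$ and $\ell(y_1,y_2) \leq \delta$. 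Hence $\mathbf{st}(Q_t^p f|_U) \geq ((\Lmax(y,t) + \eps + \delta)/t)^{p-1}$, and sending $\eps, \delta \downarrow 0$ while shrinking $U$ produces the claim.

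The main obstacle is precisely this geometric inequality. The asymptotic steepness $\sta(Q_t^p f)(y)$ is an infimum over ratios on a shrinking family of neighborhoods, while the Hopf--Lax argument only naturally produces information comparing a maximizer with one specific point. Lemma \ref{lem: dmax is lsc} is exactly what is needed to extend the pointwise information from $y$ to a whole neighborhood and thereby turn the one-sided estimate into a lower bound on $\mathbf{st}(Q_t^p f|_U)$. Once this bridge between the analytic derivative and the abstract geometric quantity is in place, the rest of the proof is an algebraic rearrangement.
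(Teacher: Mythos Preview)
Your proposal is correct and follows essentially the same route as the paper: first bound $\tfrac{\d}{\d t}Q_t^pf(y)$ from below by $-\tfrac1q(\Lmax(y,t)/t)^p$ via the competitor argument with a maximizer realizing $\Lmax$, then establish the geometric inequality $\sta(Q_t^pf)(y)\geq(\Lmax(y,t)/t)^{p-1}$ by combining the reverse triangle inequality with concavity of $u_p$ and the upper semicontinuity of $\Lmax$ from Lemma~\ref{lem: dmax is lsc}. The only cosmetic difference is that the paper packages the neighborhood argument using $\mathrm{diam}(U)$ and $\sup_{z\in U}\Lmax(z,t)$ rather than your explicit $\eps,\delta$ parameters, and works on $U$ rather than $U\cap K$; since $E$ is a neighborhood of $K$ the distinction is immaterial.
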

\begin{proof}
By  the monotonicity property shown in Lemma \ref{lem: properties of Hopf--Lax} we have that the map $t \mapsto Q_t^pf(y)$ is differentiable almost everywhere. 
Let $t \in (0,1)$ be a differentiability point, we claim that
\begin{equation}
\label{eq: estimating Q derivative}
\dfrac{\d}{\d t} Q_t ^pf(y) \geq - \frac{1}{q} \left(\frac{\Lmax (y,t)}{t}\right)^p .   
\end{equation}
We are going to prove this for $p\in(0,1)$ only, the case $p<0$ being analogous (the proof just uses $h<0$ rather than $h>0$ in below). Let $h>0$ be such that $t + h \in [0,1]$ and $x\in J^-(y)\cap E$ be a maximizer in the definition of $Q_t^pf(y)$. Since $x$ is also a competitor in the definition of $Q_{t+h}^pf(y)$ we have
\begin{align*}
Q_{t+h}^pf(y) - Q_t^pf(y) & \geq   (t+h) \, u_p \left ( \tfrac{\ell(x,y)}{t+h} \right ) - t \, u_p \left (\tfrac{\ell(x,y)}{t} \right ) = \tfrac{1}{p} \ell(x,y)^p \left ( \frac{1}{(t+h)^{p-1}} - \frac{1}{t^{p-1}}\right ).
\end{align*}
Further choosing $x$ so that $\ell(x,y)=\Lmax(y,t)$ the claim \eqref{eq: estimating Q derivative} easily follows.

To get the conclusion it will therefore be sufficient to show that
\begin{equation}
\label{eq:pezzo2}
\sta (Q_t^pf) (y))  \geq \left(\frac{\mathsf{L_{max}}(y,t)}{t}\right)^{p-1} \, . 
\end{equation}
To see this, let $U\subset E$ be an open neighborhood of $y$. Then let $z_1,z_2\in U$ be with  $z_1 \ll z_2$ and   $w  \in J^{-}(z_1) \cap E$ be a maximizer for the definition of $Q_t^pf(z_1)$ such that $\ell(w,z_1)=\Lmax(z_1,t)$. The monotonicity and concavity of $u_p$ together with the reverse triangle inequality give
\begin{align*}
Q_t^pf (z_2) - Q_t^pf(z_1) & \geq t \, u_p   ( \tfrac{\ell(w,z_2)}{t}   ) - t \, u_p   ( \tfrac{\ell(w,z_1)}{t}   )
\\
& \geq t\left ( u_p   (\tfrac{ \ell(w,z_1) + \ell(z_1,z_2)}t   ) - u_p ( \tfrac{\ell(w,z_1)}t )\right ) \geq \ell(z_1,z_2) \,u_p'  \big(\tfrac{\ell(z_1,z_2) + \ell(w,z_1)}t \big) .
\end{align*}
It follows that
\[
{\sf st}(Q_t^pf\big{|}_{U} ) \geq \frac{1}{t^{p-1}} \big({\rm diam}(U)+\sup_{z\in U} \Lmax(z,t)\big)^{p-1}, 
\]
where ${\rm diam}(U)$ is the timelike diameter of $U$. As $U$ shrinks to $y$, such diameter goes to 0, so the conclusion \eqref{eq:pezzo2} follows from the upper semicontinuity of $\Lmax$ established in Lemma \ref{lem: dmax is lsc}. 
\end{proof}

\subsection{The Kuwada Duality}

Now we are going to prove some results using the so-called Kuwada Duality, which consists of interpolating between a function and its $c$-transform using the Hopf--Lax semigroup.
We begin with a simple lemma that allows for approximation of causal functions by smooth causal functions on a globally hyperbolic spacetime based on some ideas of \cite{MinCruGra}. 
\begin{lemma}
\label{smothing a causal function}
Let $(M,g)$ be a globally hyperbolic spacetime and $\varphi : M \rightarrow \R$ be a bounded, continuous and causal function. Let $K \subset M$ be a compact set. Then, there exists a family $\varphi_{\varepsilon} : M \rightarrow \R$ of smooth causal functions such that $\varphi_{\varepsilon}\rightarrow \varphi$ pointwise as $\varepsilon \rightarrow 0$ and for every $x \in K$ we have
\[
\varliminf_{\varepsilon \downarrow 0} \| (d \varphi_{\varepsilon})_x \|_{g^{\ast}} \geq \sta (\varphi) (x) \, .
\]
\end{lemma}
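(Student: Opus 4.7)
The plan is to build $\varphi_\varepsilon$ by combining local convolutions of $\varphi$ on small geodesically convex charts, a partition-of-unity patching, and a vanishing correction by a smooth $1$-steep function $\mathbf t:M\to\R$ (which exists by \cite{Minguzzisteep}). The role of $\mathbf t$ is to absorb the small errors that patching and local curvature introduce into the causality inequality. A useful general fact used throughout is that the dual hyperbolic norm $\|\cdot\|_{g^*}$, being the pointwise infimum of the linear functionals $\omega\mapsto\omega(v)$ over future-causal $v$ with $\|v\|_g\ge 1$, is concave on the cone of future-causal covectors, so convex combinations preserve lower bounds.

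First I would fix a relatively compact neighborhood $V$ of $K$ and cover $V$ by finitely many geodesically convex charts $U_1,\dots,U_N$ of small diameter, working on each $U_i$ in exponential coordinates at a chosen $p_i \in U_i$ to identify $U_i$ with an open subset of $T_{p_i}M \cong \R^{1,n}$. On each $U_i$ I would define $\varphi_\varepsilon^i$ as the $\R^{1,n}$-convolution (in these coordinates) of $\varphi|_{U_i}$ with a smooth non-negative kernel of support size $\varepsilon$. Standard arguments yield that $\varphi_\varepsilon^i$ is smooth and converges uniformly to $\varphi$ on compact subsets of $U_i$; moreover, since the $g$-causal cones on $U_i$ deviate from the Minkowski cones at $p_i$ by an amount controlled by the chart diameter, $\varphi_\varepsilon^i$ is $g$-causal on a slight shrinking of $U_i$ up to an error of order $\varepsilon$. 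Choosing a partition of unity $\{\rho_i\}$ subordinate to $\{U_i\}$ and setting $\tilde\varphi_\varepsilon := \sum_i \rho_i\varphi_\varepsilon^i$, the identity
\[
d\tilde\varphi_\varepsilon \;=\; \sum_i \rho_i\, d\varphi_\varepsilon^i \;+\; \sum_i (\varphi_\varepsilon^i - \varphi)\, d\rho_i
\]
shows that the first sum is a convex combination of (approximately) causal covectors while the second is $o(1)$ as $\varepsilon\downarrow 0$ by uniform convergence. Finally, I set $\varphi_\varepsilon := \tilde\varphi_\varepsilon + \eta(\varepsilon)\,\mathbf t$ with $\eta(\varepsilon)\downarrow 0$ chosen slowly enough that the $1$-steepness of $\mathbf t$ dominates all these defects, and extend smoothly outside $V$ (e.g., by interpolating with a large multiple of $\mathbf t$).

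For the asymptotic steepness bound, fix $x \in K$ and $\delta>0$, and pick a neighborhood $W \ni x$ on which $\varphi$ is $L$-steep with $L \ge \sta(\varphi)(x) - \delta$. A direct computation on the Minkowski convolution of an $L$-steep function yields $\|d\varphi_\varepsilon^i\|_{g^*}(x) \ge L - o(1)$ as $\varepsilon\downarrow 0$ for each $i$ with $x \in U_i$. Combining this with the concavity of $\|\cdot\|_{g^*}$ applied to the convex combination, the $o(1)$ partition-of-unity error, and the non-negative contribution of $\eta(\varepsilon)\,d\mathbf t$, one obtains $\varliminf_{\varepsilon\downarrow 0}\|d\varphi_\varepsilon\|_{g^*}(x) \ge L$; letting $\delta\downarrow 0$ concludes. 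The main obstacle will be a careful simultaneous calibration of $\varepsilon$, the chart diameters, and $\eta(\varepsilon)$: the charts must be small enough for the curvature correction not to spoil the Minkowski-level arguments; $\varepsilon$ must be small enough for the uniform estimates and for the kernel's support to lie inside the chart; and $\eta(\varepsilon)$ must vanish fast enough to preserve the liminf, yet slowly enough to absorb the $O(\varepsilon)$ causality defect and the partition-of-unity errors—a diagonal argument along a sequence $\varepsilon_n\downarrow 0$ should handle this cleanly.
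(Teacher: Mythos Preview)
Your approach is essentially the same as the paper's: the paper's proof is a two-sentence sketch stating that standard mollification works in Minkowski and that a chart-by-chart argument (citing \cite[Theorem A.2]{Octet}) handles the general case, with details omitted. Your proposal fills in precisely these omitted details via local convolution, partition-of-unity patching, and the steep-function correction $\eta(\varepsilon)\mathbf t$, which is a natural and correct way to execute the paper's sketch; the only point to tidy up is the extension outside $V$, where ``interpolating with a large multiple of $\mathbf t$'' would spoil pointwise convergence to $\varphi$ off $V$---but this is easily fixed by running the same locally finite chart/partition-of-unity construction on all of $M$ rather than just on $V$.
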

\begin{proof}
If $M$ is the Minkowski spacetime, the conclusion follows by standard mollification. In the general case, an argument by charts, akin e.g.\ to that used in \cite[Theorem A.2]{Octet}, does the job. We omit the details.
\end{proof}

We apply the previous lemma to obtain some useful preliminary inequality that we will use in the proof of Theorem \ref{part B}.

\begin{lemma}
\label{lem: ineq 1}
Let $(M,g)$ be a globally hyperbolic spacetime and $(\mu_t,v_t)$ be solving the causal continuity inequality. Then
\[
\int \varphi(x) \, \d \mu_t(x) - \int \varphi(x) \, \d \mu_s(x) \geq \int_s^t \int \sta  (\varphi)(x)\,  \| v_r(x) \|_g \, \d \mu_r(x) \, \d r
\] 
holds for every $\varphi : M \rightarrow \R$ bounded, continuous and causal function and $t \geq s$ in $[0,1]$. 
\end{lemma}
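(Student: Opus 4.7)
The plan is to reduce to the case of smooth $\varphi$ via Lemma \ref{smothing a causal function}, and to handle the smooth case by combining \eqref{eq: causal continuity inequality} with the pointwise dual hyperbolic norm inequality $\d\varphi_x(v) \geq \|\d\varphi_x\|_{g^*}\|v\|_g$.

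\textbf{Smooth case.} Fix $\varphi \in C^\infty(M)$ causal. Smoothness plus $\leq$-monotonicity forces $\d\varphi_x$ to be a causal covector at every $x$: for $v\in J^+(0)\subset T_xM$, the curve $t\mapsto\exp_x(tv)$ is future causal for small $t>0$, so $\d\varphi_x(v)=\lim_t t^{-1}(\varphi\circ\exp_x(tv)-\varphi(x))\geq 0$. The dual hyperbolic norm inequality following \eqref{eq: dual hyperbolic norm} then gives $\d\varphi_x(v_r(x)) \geq \|\d\varphi_x\|_{g^*}\,\|v_r(x)\|_g$ for every $x,r$. In particular $g(r):=\int\d\varphi_x(v_r(x))\,\d\mu_r(x)$ is non-negative, and by Remark \ref{rem: distributional formulation} the inequality $D\Phi \geq g(r)\,\d r$ holds in $\mathcal{D}'(0,1)$ with $\Phi(t):=\int\varphi\,\d\mu_t$ non-decreasing. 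Since $D\Phi$ is a non-negative Radon measure dominating $g(r)\,\d r$ and $\Phi$ may be taken left-continuous (replacing $(\mu_t)$ by its left-continuous representative as after \eqref{eq:Phi}), integration yields
\[
\int\varphi\,\d\mu_t-\int\varphi\,\d\mu_s\;\geq\;\int_s^t g(r)\,\d r\;\geq\;\int_s^t\!\!\int\|\d\varphi_x\|_{g^*}\|v_r(x)\|_g\,\d\mu_r(x)\,\d r
\]
for every $0\leq s\leq t\leq 1$.

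\textbf{Approximation.} For a general bounded continuous causal $\varphi$, apply Lemma \ref{smothing a causal function} with the compact set $K\supset\bigcup_t\supp\mu_t$ to obtain smooth causal $\varphi_\eps$ with $\varphi_\eps\to\varphi$ pointwise and $\varliminf_{\eps\downarrow 0}\|\d(\varphi_\eps)_x\|_{g^*}\geq\sta(\varphi)(x)$ on $K$. The chart-wise mollification underlying that lemma also yields a uniform bound $\sup_K|\varphi_\eps|\leq C$. Applying the smooth-case inequality to each $\varphi_\eps$ and taking $\varliminf_\eps$, the left-hand side converges to $\int\varphi\,\d\mu_t-\int\varphi\,\d\mu_s$ by dominated convergence (since all $\mu_r$ are supported in $K$), while Fatou's lemma — applicable because the integrand on the right-hand side is non-negative — gives
\[
\varliminf_\eps\int_s^t\!\!\int\|\d(\varphi_\eps)_x\|_{g^*}\|v_r(x)\|_g\,\d\mu_r(x)\,\d r\;\geq\;\int_s^t\!\!\int\sta(\varphi)(x)\|v_r(x)\|_g\,\d\mu_r(x)\,\d r,
\]
closing the argument.

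The main technical hurdle is keeping the two lower-bound inequalities ($\d\varphi_\eps(v)\geq\|\d\varphi_\eps\|_{g^*}\|v\|_g$ on the one hand, $\varliminf\|\d\varphi_\eps\|_{g^*}\geq\sta(\varphi)$ on the other) pointing in the right direction through both the integration in time (where monotonicity of $\Phi$ upgrades the distributional inequality to a pointwise one) and the smoothing limit (where Fatou's lemma takes advantage of the non-negativity built in by the dual hyperbolic norm step). Nothing else is delicate: everything else is dominated convergence on the compactly supported side.
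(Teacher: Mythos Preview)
Your proof is correct and follows essentially the same route as the paper: reduce to smooth $\varphi$ via Lemma \ref{smothing a causal function}, use the dual hyperbolic norm inequality $\d\varphi_x(v)\geq\|\d\varphi_x\|_{g^*}\|v\|_g$ together with \eqref{eq: causal continuity inequality}, then pass to the limit with dominated convergence on the left and Fatou on the right. The only cosmetic difference is that the paper integrates the pointwise inequality $\Phi'(r)\geq g(r)$ directly via the monotone-function estimate $\Phi(t)-\Phi(s)\geq\int_s^t\Phi'(r)\,\d r$, whereas you route through the distributional formulation of Remark \ref{rem: distributional formulation}; your detour through the left-continuous representative is unnecessary (the monotone inequality $\Phi(t)-\Phi(s)\geq\int_s^t\Phi'$ holds for \emph{any} non-decreasing $\Phi$ and all $s\leq t$), but it causes no harm.
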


\begin{proof}
First of all we recall that $\Phi(s) = \int \varphi(x) \, \d \mu_s(x)$ is non-decreasing since $\mu_t$ is a causal path and $\varphi$ is a causal function. Suppose for the moment that $\varphi$ is smooth in addition to being bounded and causal. 
Notice that in this case $\varphi$ is a valid test function for the causal continuity inequality yielding $\Phi'(r) \geq \int d \varphi _x (v_r(x)) \, \d \mu_r(x)$. 
Then we compute
\begin{align*}
 \int \varphi(x) \, \d \mu_t(x) - \int \varphi(x) \, \d \mu_s(x)&=\Phi(t)-\Phi(s)  \\ 
& \geq \int_s^t  \Phi'(r) \, \d r \geq \int_s^t \int \d \varphi _x (v_r(x)) \, \d \mu_r(x) \, \d r \\ 
& \geq \int_s^t \int \| \d \varphi _x \|_{g^{\ast}} \, \|v_r(x) \|_g \, \d \mu_r(x) \, \d r \, ,
\end{align*}
where the last inequality follows by the immediate implication of the definition of dual hyperbolic norm \eqref{eq: dual hyperbolic norm}. 

Let now $\varphi : M \rightarrow \R$ be merely bounded, continuous and causal. 
Consider a family $\varphi_{\varepsilon}$ as in Lemma \ref{smothing a causal function}. 
In particular, $\varphi_{\varepsilon}$ is a valid test function for the causal continuity inequality and hence we have that
\[
\int \varphi_{\varepsilon}(x) \, \d \mu_t(x) - \int \varphi_{\varepsilon} (x) \, \d \mu_s(x) \geq \int_s^t \int \| (\d \varphi_{\varepsilon}) _x \|_{g^{\ast}} \,  \| v_r(x) \|_g \, \d \mu_r(x) \, \d r \, .
\]
Now we consider the $\varliminf$ in $\varepsilon$ as $\varepsilon \to 0$ of both expressions. 
On the left hand side we can apply the Dominated Convergence Theorem and on the right hand side we we can apply Fatou's Lemma to get
\begin{align*}
\int \varphi(x) \, \d \mu_t(x) - \int \varphi (x) \, \d \mu_s(x) 
& = \varliminf_{\varepsilon \downarrow 0} \int \varphi_{\varepsilon}(x) \, \d \mu_t(x) - \int \varphi_{\varepsilon} (x) \, \d \mu_s(x) \\ 
& \geq \varliminf_{\varepsilon \downarrow 0} \int_s^t \int \| (\d \varphi_{\varepsilon}) _x \|_{g^{\ast}} \,  \| v_r(x) \|_g \, \d \mu_r(x) \, \d r \\
& \geq \int_s^t \int \varliminf_{\varepsilon \downarrow 0} \| (\d \varphi_{\varepsilon}) _x \|_{g^{\ast}} \,  \| v_r(x) \|_g \, \d \mu_r(x) \, \d r \, .
\end{align*}
Hence by Lemma \ref{smothing a causal function} the conclusion follows.
\end{proof}

Now we state the following result which is inspired from \cite[Lemma 17.7]{AmbrosioBrueSemola} and \cite[Theorem 15.6]{maggiOT}.

\begin{lemma}
\label{lem: ineq 2}
Let $(M,g)$ be a globally hyperbolic spacetime and $(\mu_t,v_t)$ be solving the causal continuity inequality. 

Then there exists a negligible set $N \subset [0,1]$ such that for every causal function $\varphi: M \rightarrow \R$ and $t \not \in N$ we have
\[
\varliminf_{ h \rightarrow 0} \frac{1}{h} \left ( \int \varphi(x) \d \mu_{t+h}(x) - \int \varphi (x) \d \mu_{t}(x) \right ) \geq \int \sta ( \varphi )(x) \|v_t(x)\|_g \d \mu_t(x) \, .
\]
\end{lemma}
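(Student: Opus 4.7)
The plan is to derive the pointwise liminf inequality by combining Lemma \ref{lem: ineq 1} with Lebesgue's differentiation theorem. Fix a bounded continuous causal $\varphi$ and set $F_\varphi(r):=\int\sta(\varphi)(x)\|v_r(x)\|_g\,\d\mu_r(x)$. Since $\Phi_\varphi(t):=\int\varphi\,\d\mu_t$ is bounded and non-decreasing, Lemma \ref{lem: ineq 1} on $[0,1]$ yields $\int_0^1 F_\varphi\,\d r<+\infty$, hence $F_\varphi\in L^1(0,1)$. Applying the same lemma on $[t,t+h]$ for $h>0$ and on $[t+h,t]$ for $h<0$, and dividing by $h$ in each case, produces
\[
\tfrac{1}{h}\Bigl(\int\varphi\,\d\mu_{t+h}-\int\varphi\,\d\mu_t\Bigr)\ \geq\ \tfrac{1}{|h|}\int_{\min\{t,t+h\}}^{\max\{t,t+h\}} F_\varphi(r)\,\d r;
\]
as $h\to 0$ the right-hand side tends to $F_\varphi(t)$ at every Lebesgue point of $F_\varphi$, yielding the desired liminf inequality for this particular $\varphi$ outside a negligible set $N_\varphi$.

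To promote $N_\varphi$ to a single exceptional set $N$ valid for every causal $\varphi$, I would exploit the separability of $C(K)$ to fix once and for all a countable family $\mathcal F$ of bounded continuous causal functions that is dense, in uniform norm on the compact set $K$ supporting all $\mu_t$, among all such functions. Setting $N:=\bigcup_{\varphi\in\mathcal F}N_\varphi$ preserves negligibility, and the inequality holds at every $t\notin N$ for every $\varphi\in\mathcal F$.

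The final and most delicate step is the passage from the countable family $\mathcal F$ to an arbitrary causal $\varphi$. Uniform approximation $\varphi_n\to\varphi$ on $K$ transfers the left-hand side without issue, but the right-hand side is problematic because $\sta$ is not continuous under uniform perturbations of its argument — it is only lower semicontinuous in the spatial variable, and two functions that agree up to a small uniform error can have drastically different steepness constants. I expect this to be handled by choosing the approximants $\varphi_n$ in a structured way — for instance by first smoothing via Lemma \ref{smothing a causal function} and then subtracting a small multiple of the smooth $1$-steep time function of \cite{Minguzzisteep} — so as to arrange $\liminf_n\sta(\varphi_n)(x)\geq\sta(\varphi)(x)$ pointwise, after which Fatou's lemma applied to $\sta(\varphi_n)(x)\|v_t(x)\|_g$ against $\mu_t$ closes the argument. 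This approximation is the main obstacle; everything else is a direct consequence of Lemma \ref{lem: ineq 1} and the Lebesgue differentiation theorem.
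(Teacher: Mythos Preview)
Your argument is sound through the first paragraph: for each fixed bounded continuous causal $\varphi$, Lemma \ref{lem: ineq 1} plus Lebesgue differentiation gives the inequality off a negligible $N_\varphi$. This matches the paper.

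The gap is in the last step, and it is not just delicate --- the route you sketch does not close. Your countable family $\mathcal F$ is fixed in advance, and the set $N$ is built from the Lebesgue-exceptional sets of the functions $F_\psi$ with $\psi\in\mathcal F$. To pass to a general $\varphi$ you would need approximants $\varphi_n$ that (a) lie in $\mathcal F$ and (b) satisfy $\liminf_n\sta(\varphi_n)\geq\sta(\varphi)$ pointwise. Uniform density only gives (a); the construction you propose (smoothing via Lemma \ref{smothing a causal function} and shifting by $\delta\mathbf t$) produces functions with the right steepness behaviour but there is no reason they belong to $\mathcal F$, so their exceptional sets are not contained in $N$. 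There is no evident way to choose a single countable $\mathcal F$ of \emph{causal} functions that simultaneously uniformly approximates every causal $\varphi$ and controls $\sta$ from below.

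The paper sidesteps this by decoupling the two sides of the inequality. Observe that on the right-hand side only the lower semicontinuous function $u:=\sta(\varphi)\geq0$ appears; the specific $\varphi$ enters only through Lemma \ref{lem: ineq 1}, which is applied separately for each $\varphi$ and needs no exceptional set. So it suffices to find a single negligible $N$ such that
\[
\varliminf_{h\to0}\frac1h\int_t^{t+h}\!\!\int u(x)\|v_r(x)\|_g\,\d\mu_r(x)\,\d r\ \geq\ \int u(x)\|v_t(x)\|_g\,\d\mu_t(x)
\]
holds for every lower semicontinuous $u:K\to[0,\infty)$ and every $t\notin N$. For this, take the countable dense family $\mathcal D$ in $C^0(K)$ (no causality required), let $N$ collect the Lebesgue-exceptional sets of $r\mapsto\int\psi\|v_r\|_g\,\d\mu_r$ for $\psi\in\mathcal D$, approximate any continuous $u$ monotonically from below by elements of $\mathcal D$, and then any lsc $u$ by continuous ones; monotone convergence finishes. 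The point is that approximating a nonnegative lsc function from below by continuous ones is automatic, whereas approximating a causal function while preserving $\sta$ is not.
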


\begin{proof}
We claim that there exists a negligible set $N \subset [0,1]$ such that for every lower semicontinuous function $u:M \rightarrow [0,\infty)$ and $t \not \in N$ we have that
\begin{equation} \label{eq: interm ineq}
    \varliminf_{ h \rightarrow 0} \frac{1}{h} \int_{t}^{t+h} \int u(x) \|v_r(x)\|_g \, \d \mu_r(x) \, \d r \geq \int u(x) \|v_t(x)\|_g \d \mu_t(x) \, .
\end{equation}
The conclusion will follow from the fact that $\sta (\varphi): M \rightarrow [0,\infty)$ is lower semicontinuous for any causal function $\varphi$ and Lemma \ref{lem: ineq 1}. \\
At first, observe that it is sufficient to prove \eqref{eq: interm ineq} for continuous functions. 
Indeed, for any lower semicontinuous function $u$ there exist a non-decreasing sequence of continuous functions $u_n : M \rightarrow [0,\infty)$ such that $u(x) = \sup_n u_n(x)$ for any $x \in M$. 
Assuming \eqref{eq: interm ineq} holds true for $u$ continuous, for every $t \not \in N$ we have that
\begin{align*}
\varliminf_{h \rightarrow 0} \frac{1}{h} \int_{t}^{t+h} \int u (x) \|v_r(x)\|_g \, \d \mu_r(x) \, \d r & \geq \varliminf_{h \rightarrow 0} \frac{1}{h} \int_{t}^{t+h} \int u_n (x) \|v_r(x)\|_g \, \d \mu_r(x) \, \d r \\ & =  \int u_n (x) \|v_t(x)\|_g \d \mu_t(x) \, .
\end{align*}
Now we consider the $\varliminf$ as $n\to\infty$ on both sides and obtain the conclusion with the Monotone Convergence Theorem. 
So suppose that $u: M \rightarrow [0, \infty)$ is a continuous function. Moreover, we can assume that the map 
\[
[0,1] \ni t \mapsto \int u(x) \|v_t(x) \|_g \, \d \mu_t(x)
\]
is $L^1_{loc}(0,1)$, otherwise the result is readily verified.
Thus, we can apply the Lebesgue Differentiation Theorem and so there exists $N(u) \subset [0,1]$ negligible such that for every $t \not \in N(u)$ we have that
\[
\lim_{h \rightarrow 0} \frac{1}{h} \int_{t}^{t+h} \int u (x) \|v_r(x)\|_g \, \d \mu_r(x) \, \d r = \int u (x) \|v_t(x)\|_g \d \mu_t(x) \, .
\]
Let $\mathcal{D} \subset C^0(K; \R)$ (here $K\subset M$ is a compact set containing the supports of all the $\mu_t$'s) be a countable and dense set with respect to uniform topology and define
\[
N:= \bigcup_{ \psi \in \mathcal{D} } N( \psi ) \, .
\]
In particular, $N \subset [0,1]$ is negligible since it is a countable union of negligible sets. 
Let $u : M \rightarrow [0, \infty)$ be continuous and  notice that we can find $(\psi_n)\subset \mathcal{D}$ such that $\psi_n \uparrow u$ on $K$. 
Let $t \not \in N$, then we can compute
\begin{align*}
\varliminf_{h \rightarrow 0} \frac{1}{h} \int_{t}^{t+h} \int u(x) \|v_r (x) \|_g \, \d \mu_r(x) \, \d r &\geq  \varliminf_{h \rightarrow 0} \frac{1}{h} \int_{t}^{t+h} \int \psi_n(x) \|v_r (x) \|_g \, \d \mu_r(x) \, \d r \\ & \geq \int \psi_n(x) \|v_t (x) \|_g \, \d \mu_t(x) \, ,
\end{align*}
and the conclusion follows once more by Monotone Convergence Theorem. 
\end{proof}

Now we state an analogous result of \cite[Lemma 4.3.4]{AmbrGigSav} for monotone functions instead of absolutely continuous functions, moreover we deduce an integral estimate out of it.

\begin{lemma}
\label{lem: analogo AmbrGigSav}
Let $\eta: [0,1] \times [0,1] \rightarrow \R$ be a function such that $\eta (\cdot, s)$ is continuous for all $s \in[0,1]$ and, for every $\varepsilon >0$, uniformly Lipschitz on $[\varepsilon,1]$. Assume also that   $\eta(t,\cdot)$ is non-decreasing for every $t \in [0,1]$. We define
\[
r(t) := \varliminf_{h \rightarrow 0^+} \frac{\eta (t,t) - \eta (t-h,t)}{h} + \varliminf_{h \rightarrow 0^+} \frac{\eta(t,t+h) - \eta (t,t)}{h} \, ,
\]
and  define $\delta (t) = \eta(t,t)$. Then $\delta \in L^1$ and $D\delta \geq r(t) \d t$ holds in the sense of distributions. Moreover, we have 
\begin{equation}
\label{eq:delta01}
\delta (1) - \delta (0) \geq \int_0^1 r(\xi) \d \xi \, .
\end{equation}
\end{lemma}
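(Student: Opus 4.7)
The plan is to establish, in order, (i) that $\delta \in L^1(0,1)$, (ii) the distributional lower bound $D\delta \geq r(t)\,\d t$, and (iii) the endpoint estimate \eqref{eq:delta01}. For (i), joint Borel measurability of $\eta$ follows from its continuity in the first variable and monotonicity in the second, hence $\delta(t) := \eta(t,t)$ is Borel; monotonicity in $s$ gives the sandwich $\eta(t,0) \leq \delta(t) \leq \eta(t,1)$ for every $t \in [0,1]$, and continuity of $\eta(\cdot,0), \eta(\cdot,1)$ on $[0,1]$ gives their boundedness, whence $\delta \in L^\infty(0,1) \subset L^1(0,1)$.

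For (ii), fix $\phi \in C_c^\infty(0,1)$ with $\phi \geq 0$ and $\supp\phi \subset [\varepsilon,1-\varepsilon]$, and let $L = L(\varepsilon)$ denote the uniform-in-$s$ Lipschitz constant of $\eta(\cdot,s)$ on $[\varepsilon,1]$. Writing $-\int \phi'\delta\,\d t = \lim_{h\downarrow 0}\int \phi(t)\,\frac{\delta(t)-\delta(t-h)}{h}\,\d t$, we split
\begin{equation*}
\frac{\delta(t)-\delta(t-h)}{h} \;=\; \underbrace{\tfrac{\eta(t,t)-\eta(t-h,t)}{h}}_{=:\,P_h(t)} \;+\; \underbrace{\tfrac{\eta(t-h,t)-\eta(t-h,t-h)}{h}}_{=:\,Q_h(t)\,\geq\,0}.
\end{equation*}
Since $|P_h|\leq L$ on $\supp\phi$, Fatou applied to the nonnegative integrand $\phi(P_h + L)$ yields $\varliminf_h \int \phi P_h\,\d t \geq \int \phi\, r_1 \, \d t$, where $r_1(t) := \varliminf_{h \downarrow 0}(\eta(t,t)-\eta(t-h,t))/h$ is the first summand of $r$. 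For the $Q_h$-piece, the translation $u = t-h$ recasts $\int \phi(t) Q_h(t)\,\d t$ as $\int \phi(u+h)(\eta(u,u+h)-\eta(u,u))/h\,\d u$; uniform convergence $\phi(\cdot+h)\to\phi$ and Fatou on the nonnegative integrand produce the bound $\int \phi\, r_2 \, \d u$, where $r_2(t) := \varliminf_{h\downarrow 0}(\eta(t,t+h)-\eta(t,t))/h$ is the second summand of $r$. Summing yields $-\int \phi'\delta\,\d t \geq \int \phi r\,\d t$.

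For (iii), we apply the distributional bound to a family of smooth cutoffs $\phi_n \in C_c^\infty(0,1)$ with $\phi_n\equiv 1$ on $[1/n, 1-1/n]$, $|\phi_n'|\leq n$ on the transition strips, and $0\leq\phi_n\leq 1$. Then, up to a mollification error that vanishes in the limit, $-\int \phi_n'\delta \, \d t = n\int_{1-1/n}^1 \delta\,\d t - n\int_0^{1/n} \delta\,\d t$; the sandwich combined with continuity of $\eta(\cdot,0), \eta(\cdot,1)$ gives $\varlimsup_n(-\int \phi_n'\delta\,\d t) \leq \eta(1,1) - \eta(0,0) = \delta(1)-\delta(0)$. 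Since $r_2 \geq 0$, the terms $\phi_n r_2 \uparrow r_2$ satisfy monotone convergence, and $r_1$ is locally bounded, so a Fatou/dominated-type argument gives $\varliminf_n \int \phi_n r\,\d t \geq \int_0^1 r\,\d\xi$ (trivial if the right-hand side is $-\infty$). Chaining these with the distributional bound yields $\delta(1)-\delta(0) \geq \int_0^1 r\,\d\xi$. The delicate step throughout is handling $Q_h$: the quantity $(\eta(t-h,t)-\eta(t-h,t-h))/h$ uses the $s$-slice of $\eta$ at the \emph{shifted} point $t-h$, and a pointwise comparison with the ``intrinsic'' quantity $(\eta(t,t+h)-\eta(t,t))/h$ would lose an $O(1)$ Lipschitz error; only the translation $u = t-h$ inside the integral aligns the two exactly, enabling Fatou to close the estimate.
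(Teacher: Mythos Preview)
Your proof is correct and follows essentially the same route as the paper's: the identical decomposition $\delta(t)-\delta(t-h)=P_h(t)+Q_h(t)$, the same translation $u=t-h$ to align the $Q_h$-term with $r_2$ before applying Fatou, and the same cutoff-plus-monotonicity-sandwich argument to pass from the distributional bound to the endpoint estimate. Your write-up is in places slightly more explicit (e.g., invoking Fatou on $\phi(P_h+L)$ rather than loosely citing dominated convergence for a $\varliminf$), but the underlying ideas are the same.
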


\begin{proof}
It is clear that $\delta$ is bounded and Borel, hence it is in $L^1$. Let $\varphi \in C^{\infty}_c(0,1)$ be a non-negative function and $h>0$ such that $\pm h + \supp(\varphi) \subset (0,1)$. We then obtain 
\begin{equation*}
    - \int_0^1 \delta (t) \frac{\varphi(t+h)- \varphi(t)}{h} \d t = \int_0^1 \varphi(t) \frac{\eta(t,t)- \eta(t-h,t)}{h} \d t + \int_0^1 \varphi(t+h) \frac{\eta(t,t+h)- \eta(t,t)}{h} \d t,
\end{equation*}
by adding and subtracting $\eta(t-h,t)$ inside the integral and making a change of variables. We take on both sides of the previous equation $\varliminf$ for $h \downarrow 0$. On the first and second term we can apply Dominated Convergence Theorem (for the second, the uniformly Lipschitz assumption matters) and on the third term we apply Fatou Lemma. This proves the desired distributional bound. 

We pass to \eqref{eq:delta01}. Using appropriate test functions, for every $\varepsilon>0$, we obtain
\begin{equation} \label{eq: varepsilon thesis}
    \frac{1}{\varepsilon} \left [ \int_{1-\varepsilon}^1 \delta(t) \d t - \int_0^{\varepsilon} \delta(t) \d t \right ] \geq \int_{\varepsilon}^{1-\varepsilon} r(t) \d t\, .
\end{equation}
Given  $\varepsilon>0$ we estimate
\begin{align*}
    \frac{1}{\varepsilon} \int_0^{\varepsilon} \delta (t) \d t \geq \frac{1}{\varepsilon} \int_0^{\varepsilon} \eta(t,0) \, \d t = \int_0^1 \eta (\varepsilon T, 0) \, \d T, 
\end{align*}
hence, by Dominated Convergence Theorem we get $\varliminf_{\varepsilon \downarrow 0} \frac{1}{\varepsilon} \int_0^{\varepsilon} \delta(t) \d t \geq \eta(0,0)= \delta (0).$
Analogously we have that $\varlimsup_{\varepsilon \downarrow 0} \frac{1}{\varepsilon} \int_{1-\varepsilon}^1 \delta (t) \d t \leq \eta (1,1)= \delta (1)$. The conclusion follows by taking the $\varlimsup$ on \eqref{eq: varepsilon thesis} and using Dominated Convergence Theorem.
\end{proof}

Now we apply Kuwada Duality in order to prove part $(B)$ of Theorem \ref{Thm: main}. 
\begin{proposition}
\label{part B}
Let $(M,g)$ be a globally hyperbolic spacetime and $(\mu_t,v_t)$ be solving the causal continuity inequality. Let $0 \neq p <1$.

Then
\begin{equation} \label{eq: step 2 claim}
\int_0^1u_p(| \dot \mu_t |_p) \,\d t\geq\int_0^1 u_p \left ( \| v_t \|_{L^p_{\mu_t}} \right ) \,\d t   \, .
\end{equation}
Moreover, if $\int_0^1 u_p(\|v_t\|_{L^p_{\mu_t}}) \, \d t >-\infty$ then we have
\begin{equation} \label{eq: step 2 pointwise}
u_p(| \dot \mu_t |_p) \geq u_p \left ( \| v_t \|_{L^p_{\mu_t}} \right ), \quad \text{ for almost every } t \in [0,1] \, .
\end{equation}
\end{proposition}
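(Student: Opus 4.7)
The approach is a Lorentzian adaptation of Kuwada's duality argument: combine the Hamilton--Jacobi inequality for the Hopf--Lax semigroup $Q_s^p$ (Proposition \ref{prop: differential inequality of hopf lax with local steep}) with the propagation estimate supplied by \eqref{eq: causal continuity inequality} (Lemma \ref{lem: ineq 2}), and close the loop via the Kantorovich duality formula \eqref{eq: kantorovich infimizers}. The plan is to first derive the integrated lower bound $u_p(\ell_p(\mu_0,\mu_1)) \geq \int_0^1 u_p(\|v_t\|_{L^p_{\mu_t}})\,\d t$, and then deduce both \eqref{eq: step 2 pointwise} and \eqref{eq: step 2 claim} from it by rescaling and Lebesgue differentiation.

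Fix a continuous bounded steep function $\varphi$ on an emerald $E$ that is a neighborhood of a compact set containing the supports of the $\mu_t$'s, as provided by Lemma \ref{lem: finding emralds}, and set $\eta(t,s) := \int Q_s^p\varphi \,\d\mu_t$ and $\delta(t) := \eta(t,t)$. By Lemma \ref{lem: properties of Hopf--Lax} the map $Q_s^p\varphi$ is continuous and causal, and $s\mapsto Q_s^p\varphi$ is monotone and uniformly Lipschitz on $[\varepsilon,1]$ -- the exact regularity required by Lemma \ref{lem: analogo AmbrGigSav} (the direction of $s$-monotonicity flips with the sign of $p$, a cosmetic difference absorbed by the reparametrization $s\leadsto 1-s$ in the regime $p<0$). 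We then estimate the two contributions to $r(t)$ in Lemma \ref{lem: analogo AmbrGigSav}: the backward $t$-derivative of $\eta$ at $s=t$ is bounded by Lemma \ref{lem: ineq 2} applied to the causal test function $Q_t^p\varphi$, giving $\int \sta(Q_t^p\varphi)(y)\|v_t(y)\|_g\,\d\mu_t(y)$; the forward $s$-derivative at fixed $t$ is controlled by integrating the pointwise Hamilton--Jacobi inequality of Proposition \ref{prop: differential inequality of hopf lax with local steep} against $\mu_t$, giving $-\int u_q(\sta(Q_t^p\varphi)(y))\,\d\mu_t(y)$. The reverse Young inequality $ab\geq u_p(a)+u_q(b)$ -- valid in both regimes $p\in(0,1),q<0$ and $p<0,q\in(0,1)$ -- applied with $a=\|v_t(y)\|_g$ and $b=\sta(Q_t^p\varphi)(y)$, then gives
\[
r(t) \;\geq\; \int \bigl[\sta(Q_t^p\varphi)\|v_t\|_g - u_q(\sta(Q_t^p\varphi))\bigr]\,\d\mu_t \;\geq\; \int u_p(\|v_t(y)\|_g)\,\d\mu_t(y) \;=\; u_p(\|v_t\|_{L^p_{\mu_t}}).
\]
Since $\delta(0)=\int\varphi\,\d\mu_0$ and $\delta(1)=\int Q_1^p\varphi\,\d\mu_1=\int\varphi^{c_p}_E\,\d\mu_1$, Lemma \ref{lem: analogo AmbrGigSav} yields $\int \varphi^{c_p}_E\,\d\mu_1 - \int\varphi\,\d\mu_0 \geq \int_0^1 u_p(\|v_t\|_{L^p_{\mu_t}})\,\d t$, and taking the infimum over continuous bounded steep $\varphi$ via \eqref{eq: kantorovich infimizers} concludes the integrated lower bound.

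To obtain \eqref{eq: step 2 pointwise} apply the same argument on each subinterval $[s,t]\subset[0,1]$ after the reparametrization $\tilde\mu_{\tilde t}:=\mu_{s+\tilde t(t-s)}$, $\tilde v_{\tilde t}:=(t-s)v_{s+\tilde t(t-s)}$ (which still satisfies \eqref{eq: causal continuity inequality}); the scaling $u_p(\lambda z)=\lambda^p u_p(z)$ yields $u_p(\ell_p(\mu_s,\mu_t)) \geq (t-s)^{p-1}\int_s^t u_p(\|v_r\|_{L^p_{\mu_r}})\,\d r$. Writing $t=s+h$ and using the homogeneity $u_p(\ell_p/h)=u_p(\ell_p)/h^p$ gives $u_p(\ell_p(\mu_s,\mu_{s+h})/h) \geq \tfrac{1}{h}\int_s^{s+h} u_p(\|v_r\|_{L^p_{\mu_r}})\,\d r$. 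As $h\downarrow 0$, continuity of $u_p$ and the definition of the causal speed send the left-hand side to $u_p(|\dot\mu_s|_p)$, while under the assumption $\int_0^1 u_p(\|v_t\|_{L^p_{\mu_t}})\,\d t > -\infty$ Lebesgue differentiation sends the right-hand side to $u_p(\|v_s\|_{L^p_{\mu_s}})$ at a.e.\ $s$, which is \eqref{eq: step 2 pointwise}. The integrated \eqref{eq: step 2 claim} is trivial when its right-hand side equals $-\infty$, and otherwise follows by integrating \eqref{eq: step 2 pointwise}.

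The main technical hurdle is the interchange of $\varliminf_{h\to 0}$ in the second argument of $\eta$ with the $\mu_t$-integration, needed to pass from the pointwise-in-$y$ HJ inequality of Proposition \ref{prop: differential inequality of hopf lax with local steep} (valid only a.e.\ in $s$ at each fixed $y$) to an integrated bound on $\partial_s\eta(t,s)$ suitable for Lemma \ref{lem: analogo AmbrGigSav}. This is handled by Fatou's lemma together with the uniform $s$-Lipschitz estimate of Lemma \ref{lem: properties of Hopf--Lax}(iii); an analogous technicality for the backward $t$-derivative is absorbed by the universality of the negligible set in Lemma \ref{lem: ineq 2}. The sign conventions distinguishing the $p\in(0,1)$ and $p<0$ cases require further care, but only of a cosmetic nature.
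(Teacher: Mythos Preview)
Your proposal is correct and follows essentially the same Kuwada-duality route as the paper: define the two-variable function $\eta$, bound its diagonal derivative via Lemma \ref{lem: ineq 2} plus Proposition \ref{prop: differential inequality of hopf lax with local steep} and the reverse Young inequality, feed this into Lemma \ref{lem: analogo AmbrGigSav}, and close with Kantorovich duality \eqref{eq: kantorovich infimizers}; the passage from the integrated bound to \eqref{eq: step 2 pointwise} and \eqref{eq: step 2 claim} via rescaling and Lebesgue differentiation is likewise the same.

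One cosmetic discrepancy worth fixing: you set $\eta(t,s)=\int Q_s^p\varphi\,\d\mu_t$, whereas the paper uses $\eta(t,s)=\int Q_t^p\varphi\,\d\mu_s$. The paper's choice is the one that literally matches the hypotheses of Lemma \ref{lem: analogo AmbrGigSav} (Lipschitz in the \emph{first} variable from Lemma \ref{lem: properties of Hopf--Lax}(iii), non-decreasing in the \emph{second} because $Q_t^p\varphi$ is causal for every $p$); with your swap the first variable is only monotone, not Lipschitz, and the ad hoc reparametrization $s\leadsto 1-s$ for $p<0$ would scramble the boundary values $\delta(0),\delta(1)$. Simply adopting the paper's convention eliminates both nuisances without changing anything substantive.
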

\begin{proof} We shall prove that
\begin{equation}
\label{eq:claimperB}
u_p(\ell_p(\mu_0,\mu_1))\geq \int_0^1 u_p \left ( \| v_t \|_{L^p_{\mu_t}} \right ) \,\d t   \, .
\end{equation}
After a simple rescaling argument and recalling the formula
\[
\int_0^1u_p(| \dot \mu_t |_p)\,\d t=\sup\big\{\sum_{i=0}^{n-1}(t_{i+1}-t_i)u_p(\tfrac{\ell_p(\mu_{t_i},\mu_{t_{i+1}})}{t_{i+1}-t_i})\ :\ n\in\N , \, 0\leq t_0\leq\cdots t_n\leq1\big\}
\]
obtained in \cite[Theorem 2.23]{Octet}, the claim \eqref{eq: step 2 claim} follows. Then \eqref{eq: step 2 pointwise} is a direct consequence of the additional integrability assumption and existence of Lebesgue points. We thus focus on \eqref{eq:claimperB}.

Let $K\subset M$ be compact containing the supports of all the $\mu_t$'s (this exists by our definition of `solving the causal continuity inequality') and then let $E\subset M$ be a compact neighborhood of $K$ as given by Theorem \ref{thm: kantorovich duality}. Let $\varphi:E\to \R$ be a steep and continuous function, put $Q^p_0 \varphi := \varphi$ and notice that   $Q^p_1 \varphi = \varphi^{c_p}_E$ on $K$. Then   define the function $\eta: [0,1] \times [0,1] \to \R$ as 
\[
\eta (t,s) = \int Q^p_t \varphi (x) \, \d \mu_s (x) \, .
\]
Note that $\eta(\cdot , s)$ is continuous and uniformly Lipschitz on $[\varepsilon', 1]$ for any $\varepsilon' >0$ because of points $(iii)$ and $(iv)$ of Lemma \ref{lem: properties of Hopf--Lax}.  Moreover $\eta(t, \cdot)$ is non-decreasing since $Q_t^p \varphi$ is causal. 

The function $r$ defined in  Lemma \ref{lem: analogo AmbrGigSav} is then given by
\begin{align*}
r(t) = \varliminf_{h \rightarrow 0^+} \frac{1}{h} \int Q_{t}^p \varphi (y) - Q_{t-h}^p \varphi (y) \d \mu_t(y) + \varliminf_{h \rightarrow 0^+} \frac{1}{h} \int Q_t^p \varphi (y) \d [ \mu_{t+h} - \mu_{t}](y) \, .
\end{align*}
Using Lemma \ref{lem: ineq 2} and the reverse Young inequality we compute
\begin{align*}
\varliminf_{h \rightarrow 0^+} \frac{1}{h} \int Q_t^p \varphi (y)\d [ \mu_{t+h} - \mu_{t}](y) & \geq \int \sta ( Q_t^p \varphi ) \, \| v_t \|_g \, \d\mu_t \\
& \geq \int u_q\left ( \sta ( Q_t^p \varphi ) \right ) + u_p( \|v_t \|_g) \, \d \mu_t \qquad a.e.\ t .
\end{align*}
Since   $[\varepsilon',1] \ni t \mapsto Q_t^p \varphi (y)$ is uniformly Lipschitz, the Dominated Convergence theorem together with Fubini's Theorem yields 
\[
\varliminf_{h \rightarrow 0^+} \frac{1}{h} \int Q_{t}^p \varphi (y) - Q_{t-h}^p \varphi (y) \d \mu_t(y) = \int \dfrac{\d}{\d t} Q_t^p \varphi(y) \, \d \mu_t(y) \qquad a.e.\ t\, .
\]
Then  Proposition \ref{prop: differential inequality of hopf lax with local steep} implies  that   $r(t) \geq \int u_p (\| v_t\|) \d \mu_t$ for a.e. $t \in [0,1]$, hence \eqref{eq:delta01} gives
\begin{align*}
\int\varphi^{c_p}_E  \d  {\mu}_1  - \int  \varphi \d {\mu}_0  & = \delta (1) - \delta (0)  \geq \int_0^1 \int u_p(\|v_t\|_g) \, \d \mu_t \, \d t .
\end{align*}
Taking the infimum among all $\varphi:E\to M$ steep and continuous, the conclusion \eqref{eq:claimperB} follows from Theorem \ref{thm: kantorovich duality}.
\end{proof}

\subsection{The Benamou--Brenier formula on spacetimes}

We begin by recalling existence  of $\ell_p$-geodesics with constant speed connecting two given probability measures. 
This result is well known in the Riemannian setting, see for instance \cite[Theorem 2.2.2]{Gig11} or \cite{AmbrGigSav}. 
It can also be found in the Lorentzian setting, e.g., \cite[Proposition 2.47]{Octet}.

\begin{lemma}
\label{existsence of curves of constant speed}
Let $(M,g)$ be a globally hyperbolic spacetime and let $0 \neq p <1$. 
Let $\mu_0, \mu_1 \in \Prob(M)$ with compact support such that $\mu_0 \preceq \mu_1$. Then there exists a continuous causal path $t \ni [0,1] \mapsto \mu_t \in \Prob (M)$, such that 
\[
u_p( | \dot \mu_t |_p) = u_p(\ell_p (\mu_0, \mu_1))
\]
for every $t \in (0,1)$, that is to say it is a constant speed $\ell_p$-geodesic connecting $\mu_0$ and $\mu_1$ and $\supp(\mu_t) \subset J(\supp(\mu_0), \supp(\mu_1))$ for every $t \in [0,1]$.
\end{lemma}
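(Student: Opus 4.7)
The strategy is the classical one: obtain an $\ell_p$-optimal plan between $\mu_0$ and $\mu_1$, select for $\pi$-almost every pair $(x,y)$ a length-maximizing causal geodesic from $x$ to $y$ parametrized affinely on $[0,1]$, and glue these curves into a measure $\bdpi$ on $C([0,1];M)$. Then set $\mu_t := (\e_t)_\# \bdpi$ and verify the required properties.

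First I would invoke Theorem \ref{thm: kantorovich duality}(i) to produce a $p$-optimal plan $\pi \in \Pi_\leq(\mu_0,\mu_1)$ with $\ell_p(\mu_0,\mu_1) < \infty$, so that
\[
u_p(\ell_p(\mu_0,\mu_1)) = \int u_p(\ell(x,y))\,\d\pi(x,y).
\]
In particular $\pi$ is concentrated on $\{(x,y) : x \leq y\} \subset K \times K$, where $K := J(\supp\mu_0,\supp\mu_1)$ is a compact causal emerald. The core technical step is to produce a Borel map
\[
G : \{\ell \geq 0\}\cap(K\times K) \to C([0,1];M),\qquad G(x,y) = \gamma^{x,y},
\]
such that each $\gamma^{x,y}$ is a causal curve from $x$ to $y$ parametrized affinely with respect to arc length and realising $\ell(x,y)$ (so $\ell(\gamma^{x,y}_s,\gamma^{x,y}_t) = (t-s)\,\ell(x,y)$ for all $0\le s\le t\le 1$). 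I would construct $G$ via a measurable selection theorem (Kuratowski--Ryll-Nardzewski/Aumann) applied to the multifunction $(x,y) \mapsto \Gamma(x,y)$ of such maximizing constant-speed geodesics: $\Gamma(x,y)$ is nonempty by compactness of $J(x,y)$ and continuity of $\ell$ on $\{\ell\geq 0\}$, closed in the Polish space $C([0,1];M)$, and the graph of $\Gamma$ is Borel.

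Having $G$, define $\bdpi := G_\# \pi \in \mathscr{P}(C([0,1];M))$, set $\mu_t := (\e_t)_\# \bdpi$, and check that $\supp\mu_t \subset K$ (immediate from $\gamma^{x,y}\subset J(x,y)\subset K$). The continuity of $t\mapsto\mu_t$ in the narrow topology follows by dominated convergence from the continuity of $t\mapsto\gamma_t$ for every $\gamma$. For causality note that $(\e_s,\e_t)_\# \bdpi$ is concentrated on $\{(\gamma_s,\gamma_t) : \gamma_s \leq \gamma_t\}$, hence it belongs to $\Pi_\leq(\mu_s,\mu_t)$. For the constant speed, using this same coupling and the affine reparametrization property
\[
u_p(\ell_p(\mu_s,\mu_t)) \geq \int u_p(\ell(\gamma_s,\gamma_t))\,\d\bdpi(\gamma) = \int u_p((t-s)\ell(x,y))\,\d\pi(x,y) = (t-s)^p\, u_p(\ell_p(\mu_0,\mu_1)),
\]
which is to say $\ell_p(\mu_s,\mu_t) \geq (t-s)\,\ell_p(\mu_0,\mu_1)$ (with $p\in(0,1)$; the case $p<0$ is analogous with the appropriate sign conventions).

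Finally I would invoke the reverse Lorentzian triangle inequality for $\ell_p$ along the chain $\mu_0 \preceq \mu_s \preceq \mu_t \preceq \mu_1$ (a consequence of gluing causal plans and reverse Minkowski for $\|\cdot\|_{L^p}$ with $p<1$) to get
\[
\ell_p(\mu_0,\mu_1) \geq \ell_p(\mu_0,\mu_s) + \ell_p(\mu_s,\mu_t) + \ell_p(\mu_t,\mu_1) \geq [s + (t-s) + (1-t)]\,\ell_p(\mu_0,\mu_1),
\]
forcing equality throughout and hence $\ell_p(\mu_s,\mu_t) = (t-s)\,\ell_p(\mu_0,\mu_1)$; the definition of $|\dot\mu_t|_p$ then yields $u_p(|\dot\mu_t|_p) = u_p(\ell_p(\mu_0,\mu_1))$ on $(0,1)$. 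The main obstacle is the measurable selection of $\gamma^{x,y}$: one must verify Borel measurability of the multifunction and that maximizing causal geodesics exist between every causally related pair inside $K$, which relies on global hyperbolicity and the Avez--Seifert type existence theorem for maximizers in compact causal diamonds.
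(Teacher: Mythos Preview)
The paper does not actually prove this lemma: it is stated as a recollection and attributed to \cite[Proposition 2.47]{Octet} (with the Riemannian antecedents in \cite{Gig11, AmbrGigSav}). Your plan is the standard one underlying those references---lift an optimal $\pi$ to a measure on maximizing geodesics via measurable selection, push forward by $\e_t$, and combine the lower bound from the explicit coupling with the reverse triangle inequality for $\ell_p$---and it is correct as sketched. The only point worth flagging is the degenerate case: when $u_p(\ell_p(\mu_0,\mu_1))=-\infty$ (which for $p<0$ occurs as soon as every causal plan charges $\{\ell=0\}$) your chain of inequalities collapses to a tautology, so you should remark separately that in this case $\ell_p(\mu_s,\mu_t)=0$ for all $s\le t$ (any subcoupling of $(\e_s,\e_t)_\#\bdpi$ inherits mass on null-related pairs) and hence $u_p(|\dot\mu_t|_p)=-\infty$ as well.
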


Now we are able to prove the Lorentzian Benamou--Brenier formula for optimal transport.

\begin{theorem}[Benamou--Brenier formula for spacetimes]
\label{thm: BB theorem Lor} 
Let $(M,g)$ be a globally hyperbolic spacetime and let $0 \neq p <1$. 
Let $\overline{\mu}_0, \overline{\mu}_1 \in \Prob(M)$ be two probability measures with compact support. 
Then
\begin{equation} 
\label{eq: BB formula}
u_p(\ell_p(\overline{\mu}_0, \overline{\mu}_1)) = \sup \int_0^1 \int u_p(\|v_t(x) \|_g) \, \d \mu_t(x) \, \d t \, ,
\end{equation}
where the supremum is taken over all solutions $(\mu_t,v_t)$ of the causal continuity inequality such that  $\overline{\mu}_0 \preceq \mu_t \preceq \overline{\mu}_1$ for every $t\in[0,1]$.
 Moreover, if $\overline{\mu}_0 \preceq \overline{\mu}_1$ then the supremum on the right hand side of \eqref{eq: BB formula}
is achieved.
\end{theorem}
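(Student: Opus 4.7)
The plan is to prove equality and attainment in one go, sandwiching the right-hand side of \eqref{eq: BB formula} between the two halves of Theorem \ref{Thm: main} and realizing the extremizer via Lemma \ref{existsence of curves of constant speed}. The trivial case $\bar\mu_0 \not\preceq \bar\mu_1$ forces the admissible class in the supremum to be empty (by transitivity of $\preceq$ applied to $\bar\mu_0 \preceq \mu_t \preceq \bar\mu_1$), so both sides equal $-\infty$; I therefore assume $\bar\mu_0 \preceq \bar\mu_1$ from now on.

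For the direction $\sup \leq u_p(\ell_p(\bar\mu_0, \bar\mu_1))$, I would fix an admissible pair $(\mu_t, v_t)$ and first establish a reverse triangle inequality for $\ell_p$. Gluing optimal causal couplings via disintegration, using the pointwise reverse triangle inequality for $\ell$ and the reverse Minkowski inequality (valid for $0 \neq p < 1$ on nonnegative integrands), one obtains $\ell_p(\bar\mu_0, \bar\mu_1) \geq \ell_p(\mu_0, \mu_1)$ from the chain $\bar\mu_0 \preceq \mu_0 \preceq \mu_1 \preceq \bar\mu_1$ (which is immediate from the admissibility constraint together with causality of the curve $(\mu_t)$). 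Monotonicity of $u_p$ then yields $u_p(\ell_p(\bar\mu_0, \bar\mu_1)) \geq u_p(\ell_p(\mu_0, \mu_1))$, and the intermediate claim \eqref{eq:claimperB} inside the proof of Proposition \ref{part B} gives $u_p(\ell_p(\mu_0, \mu_1)) \geq \int_0^1 \int u_p(\|v_t\|_g) \, \d\mu_t \, \d t$. Chaining these inequalities proves the upper bound on the supremum.

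For the matching lower bound and attainment, I would invoke Lemma \ref{existsence of curves of constant speed} to produce a constant-speed $\ell_p$-geodesic $(\mu_t^*)$ from $\bar\mu_0$ to $\bar\mu_1$, with $\supp(\mu_t^*) \subset J(\supp(\bar\mu_0), \supp(\bar\mu_1))$ compact and $|\dot\mu_t^*|_p \equiv \ell_p(\bar\mu_0, \bar\mu_1)$ for $t \in (0,1)$. Assuming $u_p(\ell_p(\bar\mu_0, \bar\mu_1)) > -\infty$, Proposition \ref{prop: part (A) in glob hyper} delivers causal Borel vector fields $(v_t^*)$ so that $(\mu_t^*, v_t^*)$ solves \eqref{eq: causal continuity inequality} and $u_p(\|v_t^*\|_{L^p_{\mu_t^*}}) \geq u_p(|\dot\mu_t^*|_p)$ a.e.; causality of $(\mu_t^*)$ automatically guarantees admissibility $\bar\mu_0 \preceq \mu_t^* \preceq \bar\mu_1$. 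Integrating gives $\int_0^1 \int u_p(\|v_t^*\|_g) \, \d\mu_t^* \, \d t \geq u_p(\ell_p(\bar\mu_0, \bar\mu_1))$, which combined with the first direction forces equality and exhibits $(\mu_t^*, v_t^*)$ as a maximizer. The degenerate case $u_p(\ell_p(\bar\mu_0, \bar\mu_1)) = -\infty$, only possible for $p < 0$ with $\ell_p(\bar\mu_0, \bar\mu_1) = 0$, is handled by the pair $(\mu_t^*, 0)$, which always solves the causal continuity inequality (as noted right after Definition \ref{def: supersolution in the causal sense}) and trivially matches the value $-\infty$.

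The only genuinely delicate step is the reverse triangle inequality for $\ell_p$: while classical in flavor, its verification for $p < 0$ requires some care with reverse Minkowski and with the potentially vanishing contributions from non-timelike-related pairs. Everything else is a bookkeeping combination of Theorem \ref{Thm: main} with the geodesic-existence Lemma.
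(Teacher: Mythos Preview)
Your proposal is correct and follows the paper's strategy exactly: Proposition \ref{part B} (specifically the intermediate claim \eqref{eq:claimperB}) for the upper bound, and Lemma \ref{existsence of curves of constant speed} combined with Proposition \ref{prop: part (A) in glob hyper} for the lower bound and attainment. You are in fact slightly more careful than the paper in two places --- invoking the reverse triangle inequality for $\ell_p$ to pass from $\ell_p(\mu_0,\mu_1)$ to $\ell_p(\bar\mu_0,\bar\mu_1)$ (the paper's proof silently treats these as equal, which the admissibility condition $\bar\mu_0 \preceq \mu_t \preceq \bar\mu_1$ does not force), and explicitly exhibiting $(\mu_t^*,0)$ as a maximizer in the degenerate case $u_p(\ell_p)=-\infty$, which the paper does not address.
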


\begin{proof}
First of all notice that if $\overline{\mu}_0 \not \preceq \overline{\mu}_1$ then \eqref{eq: BB formula} is verified since both sides are equal to $- \infty$, indeed there are no causal paths connecting $\overline{\mu}_0$ to $\overline{\mu}_1$. So, without loss of generality we assume that $\overline{\mu}_0 \preceq \overline{\mu}_1$. We are going to prove \eqref{eq: BB formula} by showing the two inequalities separately. 
Notice that the inequality `$\geq$' is proven in Proposition \ref{part B}. Now we are left to show the other inequality. 
Using Lemma \ref{existsence of curves of constant speed}, we find a continuous causal path $[0,1] \ni t \mapsto \mu_t \in \Prob(M)$ such that $\mu_0 = \overline{\mu}_0$ and $\mu_1=\overline{\mu}_1$ with speed $u_p( | \dot \mu_t |_p ) = u_p(\ell_p(\mu_0, \mu_1))$. If $\int_0^1u_p\left ( | \dot \mu_t |_p \right )\,\d t= -\infty$ the remaining inequality is trivial. If $\int_0^1u_p\left ( | \dot \mu_t |_p \right )\,\d t > -\infty$, via Proposition \ref{prop: part (A) in glob hyper}, we construct the vector fields $v_t$ associated to the curve $\mu_t$, giving
\[
u_p(\ell_p(\mu_0, \mu_1)) =\int_0^1 u_p(| \dot \mu_t |_p) \, \d t \leq \int_0^1 \int u_p(\| v_t(x) \|_g) \d \mu_t(x) \, \d t \, .
\]
Since `$\geq$' is already proved this suffices to conclude the claim. Moreover, in the case $\overline{\mu}_0 \preceq \overline{\mu}_1$, the $\ell_p$-geodesic coupled with $v_t$ maximizes the problem on the right hand side of \eqref{eq: BB formula} .
\end{proof}

In the following remark, we discuss some properties of a maximizing couple $(\mu_t,v_t)$ on the dynamical problem in \eqref{eq: BB formula}.

\begin{remark}
Let $(M,g)$ be a globally hyperbolic spacetime and let $0 \neq p <1$.  Let $\overline{\mu}_0 \preceq \overline{\mu}_1$ be two probability measures with compact support. 
Suppose that the $\ell_p$-geodesic $[0,1] \ni t \mapsto \mu_t$ constructed in Lemma \ref{existsence of curves of constant speed} satisfies $\int_0^1u_p\left ( | \dot \mu_t |_p \right )\,\d t > - \infty$ and let $\boldsymbol{\pi}$ be a lifting   and $v_t$ be the Borel vector fields as in \eqref{eq: barycenter}. \\ 
Then the couple $(\mu_t, v_t)$ is a maximizer of the supremum problem on the right hand side of \emph{\eqref{eq: BB formula}} such that
\begin{itemize}
\item[i)] $t \mapsto \mu_t$ is a continuous causal path and $\mu_0= \overline{\mu}_0$, $\mu_1 = \overline{\mu}_1$,
\item[ii)] $(\mu_t, v_t)$ satisfies equality in the causal continuity inequality \emph{\eqref{eq: causal continuity inequality}}.
\end{itemize} 
The first part follows by construction. We are left to prove ii). Given a smooth causal function $\varphi \in C^{\infty}(M)$, define the non-decreasing function $\Phi(t) = \int \varphi \, \d \mu_t$. Since \eqref{eq: causal continuity inequality} holds we have
\begin{align} \label{chain1}
\Phi(1) - \Phi(0) & \geq \int_0^1 \Phi'(t) \d t \geq \int_0^1 \d\phi_{x} (v_t(x)) \d \mu_t (x) = \int_0^1 \int \d \varphi_{\gamma_t}( \dot \gamma _t) \d \boldsymbol{\pi}(\gamma) \d t \, ,
\end{align}
where the last identity follows by the definition of $v_t$. Moreover, since $\boldsymbol{\pi}$ is concentrated on smooth geodesics, we can compute using the Fundamental Theorem of Calculus that
\begin{align*}
\Phi(1) - \Phi(0)= \int \varphi(\gamma_1) - \varphi (\gamma_0) \, \d \boldsymbol{\pi} (\gamma) = \int \int_0^1 \d \varphi_{\gamma_t} ( \dot \gamma_t)\d t \d \boldsymbol{\pi} (\gamma) \, .
\end{align*}
In particular, all the inequalities of \eqref{chain1} are equalities and so $\Phi$ is absolutely continuous and equality holds in the causal continuity inequality.
\hfill$\blacksquare$
\end{remark}


\end{document}